\documentclass{article}

\usepackage{PRIMEarxiv}
\usepackage[utf8]{inputenc}
\usepackage[T1]{fontenc}
\usepackage{hyperref}
\usepackage{url}
\usepackage{booktabs}
\usepackage{nicefrac}
\usepackage{microtype}
\usepackage{fancyhdr}
\usepackage{float}
\usepackage[font=small,labelfont=bf]{caption}
\usepackage{enumitem}

\usepackage{amsmath, amsfonts, amssymb, amsthm, mathtools}
\usepackage{bm}
\usepackage{multirow}
\usepackage{siunitx}
\usepackage{algorithm}
\usepackage{algpseudocode}
\usepackage{tikz}
\usepackage{tikz-cd}
\usepackage{pgfplots}
\pgfplotsset{compat=1.18}

\usepackage[nameinlink,noabbrev,capitalize]{cleveref}

\usepackage[acronym]{glossaries-extra}
\setabbreviationstyle[acronym]{long-short}
\glssetcategoryattribute{acronym}{nohyperfirst}{true}
\newacronym{acr:amod}{AMoD}{Autonomous Mobility-on-Demand}
\newacronym{acr:mod}{MoD}{Mobility-on-Demand}
\newacronym{acr:pt}{PT}{Public Transit}
\newacronym{acr:mcp}{MCP}{Municipality}
\newacronym{acr:od}{OD}{Origin-Destination}
\newacronym{acr:mnl}{MNL}{Multinomial Logit}
\newacronym{acr:kkt}{KKT}{Karush-Kuhn-Tucker}
\newacronym{acr:micp}{MICP}{Mixed Integer Convex Program}
\newacronym{acr:sca}{SCA}{Successive Convex Approximation}
\newacronym{acr:ibr}{IBR}{Iterative Best Response}
\newacronym{acr:osm}{OSM}{OpenStreetMap}

\newcommand{\agraph}{\mathcal{G}^a}
\newcommand{\anodes}{\mathcal{V}^a}
\newcommand{\aedges}{\mathcal{E}^a}
\newcommand{\tup}[1]{\left(#1\right)}
\newcommand{\ptl}{L}
\newcommand{\ptgraph}{\mathcal{G}^{pt}}
\newcommand{\ptnodes}{\mathcal{V}^{pt}}
\newcommand{\ptedges}{\mathcal{E}^{pt}}
\newcommand{\wgraph}{\mathcal{G}^{w}}
\newcommand{\wnodes}{\mathcal{V}^{w}}
\newcommand{\wedges}{\mathcal{E}^{w}}
\newcommand{\od}{\tilde D}

\theoremstyle{plain}
\newtheorem{theorem}{Theorem}[section]
\newtheorem{lemma}{Lemma}[section]

\theoremstyle{definition}
\newtheorem{definition}{Definition}[section] 
\newtheorem{assumption}{Assumption}[section]

\theoremstyle{remark}
\newtheorem{remark}{Remark}[section]

\title{Implementation-Based Incentive Design for Autonomous Mobility-on-Demand and Transit Systems}

\author{
  \textbf{Xinling Li}$^{*}$ \quad
  \textbf{Runyu Zhang}$^{*}$ \quad
  \textbf{Gioele Zardini}$^{*}$ \\
  $^{*}$Laboratory for Information \& Decision Systems, Massachusetts Institute of Technology, Cambridge, MA, USA \\
  \texttt{xinli831@mit.edu}, \texttt{runyuzha@mit.edu}, \texttt{gzardini@mit.edu}
}

\date{}

\begin{document}
\maketitle

\begin{abstract}
A municipality may know a socially desirable operating point for a multimodal transportation system, but achieving that point is challenging when \gls{acr:amod} and \gls{acr:pt} operators pursue selfish objectives alongside endogenous passenger choices.
Existing equilibrium-based regulation models typically search over municipal policies and then predict the induced operator equilibrium, which creates strong behavioral assumptions, equilibrium-selection issues, and difficult bilevel or multilevel optimization problems.
This paper proposes an implementation-based alternative.
Rather than asking which municipal action induces the best equilibrium, we ask: given a target operating profile, what is the minimum realized transfer required to make unilateral deviation unattractive for each operator?
Using $k$-implementation theory, this payment decomposes into two unilateral deviation gains, one for the \gls{acr:amod} operator and one for the \gls{acr:pt} operator.
Calculating this payment requires computing three distinct objects: the social target, the \gls{acr:amod} best response, and the \gls{acr:pt} best response. Operationalizing these computations is nontrivial because each represents a large-scale network optimization problem complicated by endogenous mode choice and congestion. To address this complexity, we develop tailored mathematical formulations and algorithms for each of the three corresponding oracles.
For the social target oracle, we derive a decomposition and entropy-regularized mixed-integer convex formulation that balances social optimality and strategy implementability.
For the \gls{acr:amod} oracle, we derive an exact reformulation, a convex relaxation providing a global upper bound, and a sequential convex approximation procedure producing feasible lower bounds.
For the \gls{acr:pt} oracle, we develop a mixed-integer convex relaxation and characterize its exactness condition.
A NYC case study shows that the proposed framework computes tight implementation-payment bounds and reveals how the dominant source of incentive misalignment shifts with congestion.
\end{abstract}

\noindent\textbf{Keywords:} Multimodal transportation, Incentive design, Choice-based optimization

\section{Introduction}
Rapid urbanization continues to place a severe strain on existing transportation infrastructure. Traffic congestion has emerged as a pervasive challenge for urban areas globally, resulting in profound economic and environmental externalities, including lost productivity and increased emissions~\cite{schrank20112011}.
Urban transportation agencies increasingly face a coordination problem rather than a purely technological one. 
Public transit provides high-capacity service on fixed corridors, while on-demand mobility provides flexible door-to-door service. 
These services can be complementary when coordinated, but they can also compete for riders, duplicate service, and worsen congestion through additional vehicle miles traveled and empty rebalancing trips~\cite{oh2021impacts,nahmias2021evaluating}. 
The operational decisions that determine this interaction are made at different institutional layers: an \gls{acr:amod} operator chooses prices, route assignments, and rebalancing flows; a \gls{acr:pt} operator chooses service frequencies subject to capacity and operating-cost considerations; passengers respond through mode choice; and the \gls{acr:mcp} evaluates outcomes through a social objective that internalizes travel time, operating cost, and externalities.

The resulting regulatory problem is difficult for two reasons. 
First, the socially preferred operating point is generally not privately optimal for either operator. 
The \gls{acr:amod} operator internalizes fare revenue and fleet costs, but not the full congestion externality imposed on the road network. 
The \gls{acr:pt} operator may be more aligned with public goals, but its frequency decisions are still governed by its own operating objective and institutional constraints~\cite{gomez2011going}. 
Second, the technical objects needed to evaluate this misalignment are themselves challenging network optimization problems. 
Passenger mode choice depends on prices and service quality; \gls{acr:amod} routing and rebalancing create endogenous road congestion; and transit frequencies affect both waiting times and capacity. 
Thus, incentive design cannot be appended as a simple scalar subsidy calculation after solving a standard co-optimization problem.

%A significant part of the complexity in managing the multimodal mobility system results from a critical misalignment between the overall system efficiency and selfish operators' incentives. 
%While multimodal transportation systems function as essential societal infrastructure to achieve optimized system-wide welfare, typically measured by minimizing total travel time, operational costs, and environmental externalities, the \gls{acr:mod} operators often act as self-interested profit maximizers. Their operational strategies, including pricing, routing, and rebalancing, are optimized for private revenue rather than public benefit. Consequently, the operation of \gls{acr:mod} has been shown to exacerbate road congestion and total vehicle miles traveled~\cite{oh2021impacts,nahmias2021evaluating}. Meanwhile, though \gls{acr:pt} operators are often regarded as more aligned with the public good, they still operate as independent agents governed by their own internal utility metrics and institutional constraints~\cite{gomez2011going}.

The dominant approach in the emerging literature is to model regulation as an equilibrium-based hierarchical game, in which the municipality selects a policy and anticipates the equilibrium response of the operators~\cite{zardini2021game,dandl2021regulating,he2025hierarchical}. This is an intuitive approach, but it creates a demanding modeling pipeline. 
The municipality must search over a regulatory action space, each candidate action induces a lower-level operator game, the induced game may have multiple equilibria, and the equilibrium constraints inherit the nonconvexities created by passenger choice and network congestion. 
The behavioral assumption is also strong: operators are assumed to reason through the complete strategic environment and coordinate on the relevant equilibrium. 
For a regulator, however, the central question is often more direct: given a socially desirable operating point, what is the minimum intervention needed to make unilateral deviation unattractive?

This paper answers that question using $k$-implementation theory~\cite{monderer2003k}. 
In the implementation view, the municipality is an interested party that cannot directly force operator actions but can commit to transfers contingent on the realized operator profile. 
For a target profile $z$, the minimum realized payment required to implement $z$ is the sum of the operators' unilateral deviation gains at that target. 
This converts the regulatory problem from an equilibrium-search problem into a target-implementation problem. 
Instead of optimizing over municipal policies and solving an induced equilibrium game, the municipality must compute a target profile and two conditional best responses.
This shift is the conceptual core of the paper.
Specifically, we do not simply import an existing game-theoretic concept into a transportation setting. 
The non-obvious step is to make the implementation formula operational for a realistic multimodal network, where the utilities in the deviation terms are not closed-form functions but outcomes of large-scale service-design problems with endogenous passenger choice. 
Applying $k$-implementation therefore requires solving three coupled but separable optimization tasks: a social target problem, an \gls{acr:amod} deviation problem, and a \gls{acr:pt} deviation problem. 
Each task corresponds to a central class of transportation models, and each has its own nonconvexities. 
The paper develops the formulations and algorithms needed to compute these objects with explicit exactness statements or solution-quality certificates.

The first component is the target oracle. 
We formulate a joint \gls{acr:amod}--\gls{acr:pt} social-cost minimization model that includes operator-specific service decisions, binary-logit passenger mode choice, transit capacity constraints, and endogenous road congestion induced by \gls{acr:amod} fleet flows. 
We show that, because \gls{acr:amod} prices affect the social target only through mode choice, the target model admits a price-recovery decomposition that removes the logit constraint from the target optimization. 
We then add entropy regularization to avoid extreme recovered prices. 
The regularization parameter has a clear interpretation, as it trades off strict social-cost minimization against price implementability.

The second component is the \gls{acr:amod} deviation oracle. 
Fixing the target transit action, we formulate the \gls{acr:amod} operator's best response over prices, passenger route assignments, and rebalancing. 
The problem is nonconvex because prices, demand, travel times, and congestion are jointly endogenous.
We derive an exact reformulation that isolates the nonconvexity into a single congestion--rebalancing term. 
A convex relaxation of this term provides a global upper bound on the operator's best-response profit, while a trust-region sequential convexification algorithm computes feasible incumbents and hence lower bounds. 
The resulting gap gives an a posteriori certificate for the \gls{acr:amod} deviation value used in the implementation payment.

The third component is the \gls{acr:pt} deviation oracle. 
Fixing the target \gls{acr:amod} action, we formulate the transit operator's frequency-design problem with endogenous ridership. 
We convert the frequency-dependent waiting-time term into a mixed-integer formulation and relax the logit equality into a convex demand upper envelope. 
We then characterize the condition of an exact relaxation.
When this condition fails, the relaxation remains meaningful as a sign of a regime shift in transit operation.

These three components produce the implementation payment.
Given the social target~$z=(z^a,z^{pt})$, the \gls{acr:amod} oracle computes the maximum profit obtainable by deviating from $z^a$ while \gls{acr:pt} remains at $z^{pt}$, and the \gls{acr:pt} oracle computes the analogous deviation value for transit. 
The difference between each best-response value and the corresponding target utility is the operator's deviation gain. 
Their sum is the minimum realized subsidy required to implement the target under the $k$-implementation logic.
% When an oracle is solved exactly, its contribution to the payment is exact; when it is solved with certified bounds, the implementation payment is correspondingly bracketed.

We evaluate the framework on a Manhattan, NYC case study.
The experiments show that the proposed oracles are computationally practical at urban scale and that the implementation payment is economically informative. 
% In the tested regimes, the payment is not monotone in background traffic. 
% Instead, it is largest at low and high congestion levels, revealing two different sources of incentive misalignment. 
% Under light congestion, the main friction is market-power behavior by \gls{acr:amod} and frequency incentives by \gls{acr:pt}. 
% Under heavy congestion, the friction shifts toward road externalities and under-provision of socially valuable transit service. 
% Thus, the payment serves as a diagnostic of which operator incentive is misaligned and why.
Specifically, this paper makes the following contributions.
First, we formulate an implementation-based regulatory framework for multimodal \gls{acr:amod}--\gls{acr:pt} systems. 
The framework replaces an equilibrium-based municipal policy search with a constructive target-implementation calculation based on unilateral deviation gains.
Second, we develop a target-oracle model for socially preferred multimodal operation with explicit mode choice and endogenous \gls{acr:amod} congestion. 
We prove a price-recovery decomposition and derive a mixed-integer convex formulation after entropy regularization.
Third, we derive an \gls{acr:amod} best-response reformulation that isolates the structural nonconvexity, and supports both a convex global upper bound and a sequential convexification algorithm with a certified optimality gap.
Fourth, we derive a \gls{acr:pt} best-response mixed-integer convex relaxation and provide an exactness condition that distinguishes demand-constrained regimes from capacity-constrained regimes.
Finally, we demonstrate the framework on a large-scale Manhattan instance and show that the resulting implementation payment quantifies both the magnitude and the operational source of incentive misalignment.
%\begin{enumerate}
%\item We formulate an implementation-based regulatory framework for multimodal \gls{acr:amod}--\gls{acr:pt} systems. 
%    The framework replaces an equilibrium-based municipal policy search with a constructive target-implementation calculation based on unilateral deviation gains.
%\item We develop a target-oracle model for socially preferred multimodal operation with explicit mode choice, endogenous \gls{acr:amod} congestion, rebalancing, transit frequency design, and transit capacity. 
%We prove a price-recovery decomposition and derive a mixed-integer convex formulation after entropy regularization.
%\item We derive an \gls{acr:amod} best-response reformulation that eliminates prices, isolates the structural nonconvexity, and supports both a convex global upper bound and a sequential convexification algorithm with a certified optimality gap.
%\item We derive a \gls{acr:pt} best-response mixed-integer convex relaxation and provide an exactness condition that distinguishes demand-constrained regimes from capacity-constrained regimes.
%    \item We demonstrate the framework on a large-scale Manhattan instance and show that the resulting implementation payment quantifies both the magnitude and the operational source of incentive misalignment.
%\end{enumerate}

The remainder of this paper is organized as follows. \cref{sec:review} reviews the relevant literature on multimodal design, strategic regulation, and implementation. 
\cref{sec:pre} defines the multimodal interaction model and introduces the implementation concept. \cref{sec:Method} presents the modular framework and the three optimization oracles. 
\cref{sec:Results} reports the computational study.
\cref{sec:Conclusion} concludes and discusses extensions.

\section{Literature Review}\label{sec:review}
This paper lies at the intersection of three literature streams: centralized multimodal system design, strategic regulation of decentralized mobility operators, and implementation incentive design. 
The first stream studies how transit and on-demand services should be configured when the relevant decisions are aligned under a common objective. 
The second stream acknowledges that operators and the municipality have different objectives and therefore models the system as a game. 
The third stream asks how an interested party can induce a desired behavior through transfers without redesigning the underlying game.
Our contribution combines all three: the municipal target is computed through a coordinated system-design model, the decentralized operator interaction is inherited from the game-theoretic literature, and the regulatory payment is derived from~$k$-implementation rather than from equilibrium search.

%The proliferation of \gls{acr:mod} has catalyzed significant research interest in regulating the integrated transit and \gls{acr:mod} systems to achieve a better multimodal system design. Existing research diverges primarily on the behavioral assumption governing the operators. One paradigm assumes centralized, integrated optimization with perfectly aligned objectives. The alternative paradigm acknowledges the self-interested, strategic nature of independent operators competing or cooperating within a shared network. In this section, we review the methodological contributions from both perspectives.

Before reviewing these streams, we clarify scope and terminology. 
First, \gls{acr:mod} encompasses both human-driven fleets (e.g., Uber, Lyft) and autonomous fleets (e.g., Waymo, Zoox).
At the fleet coordination level, the key difference between these two services is that the human-driven \gls{acr:mod} services are subject to driver-side uncertainties, i.e., cruising and participation behavior~\cite{brar2026mean}, whereas the \gls{acr:amod} systems enable centralized fleet control~\cite{li2026reproducibility}.
 While all the works reviewed in the section assume centralized fleet control, many do not distinguish between human-driven and autonomous \gls{acr:mod}, referring to it as \gls{acr:mod} when it actually means \gls{acr:amod}.
To highlight this difference, we explicitly use \gls{acr:amod} to emphasize the centralized control paradigm.
Likewise, we use ``multimodal'' in the narrow sense of the interaction between \gls{acr:pt} and \gls{acr:amod}. 
We do not attempt a comprehensive review of the much broader literatures on general congestion pricing, micromobility integration, or transit fare policy except where those works directly inform the present \gls{acr:pt}--\gls{acr:amod} regulation problem.

\subsection{Centralized and Integrated Multimodal System Design}
The first stream assumes that transit and \gls{acr:amod} services are under aligned objectives, either because a single planner controls both services or because the two operators are treated as cooperative.
This literature is directly relevant because the target-strategy oracle in our framework solves precisely such a coordinated design problem.

A first class of papers treats passenger flows as planner-controlled variables and optimizes them jointly with service design toward a system objective~\cite{salazar2019intermodal,luo2021multimodal,auad2022ridesharing}. 
These works are valuable for system planning and network redesign, but the induced passenger allocation is effectively chosen by the planner. 
In the present paper, this is too strong as a market description, because travelers remain free to choose between \gls{acr:pt} and \gls{acr:amod} in response to service quality and price.

A second class improves behavioral realism by embedding passenger response through user-equilibrium or economic-equilibrium formulations~\cite{liu2025integrating,zhang2021integrating,ma2023economic}. 
This is an important step when travelers determine their own routes and route interactions are the main source of decentralization. 
In the setting studied here, however, the salient decentralized decision on the demand side is mode choice. Conditional on selecting \gls{acr:amod}, routing and rebalancing are centrally controlled by the operator. 
For this reason, route-equilibrium models are less well aligned with the operator-level decisions that matter in our regulation problem.

A third class, which is closest to our target-oracle model, represents passenger response through explicit choice models while keeping routing or service design under operator control. 
\cite{liu2019framework} and \cite{pinto2020joint} integrate mode choice into on-demand system design, typically through simulation-based or problem-specific heuristics. 
\cite{bertsimas2020joint} show how to jointly optimize frequency and pricing on multimodal transit networks at scale. 
\cite{banerjee2025plan} develop fast algorithms for multimodal transit operations with fixed-line service and demand-responsive vehicles. 
\cite{guo2024design} formulate a transit-centric \gls{acr:pt}/\gls{acr:amod} design problem with explicit discrete choice. 
Among these papers, \cite{guo2024design} is especially close in spirit to our setting, but road congestion is exogenous in their model, whereas our target module endogenizes the congestion created by \gls{acr:amod} fleet operations.

Taken together, centralized design papers provide the natural building blocks for the municipal target model developed later. 
However, they do not by themselves answer the problem studied here, because they stop at identifying a desirable coordinated operating point. 
They do not explain how a municipality can induce that point when the \gls{acr:pt} and \gls{acr:amod} operators are self-interested and strategically independent. 
In our paper, the coordinated optimum is therefore the target to be implemented.

\subsection{Strategic Interaction and Regulation in Multimodal Mobility Systems}\label{sec:review_joint}
The second stream drops the aligned-objective assumption and models \gls{acr:pt}, \gls{acr:amod}, and sometimes the municipality as distinct strategic agents. 
This stream is the closest comparator for our paper because it studies the same underlying policy problem: how a public agency should regulate self-interested mobility providers operating on a shared urban system.

\cite{mo2021competition} are among the earliest to model competition between shared autonomous vehicles and public transit as a game, using simulation-based \gls{acr:ibr} to approximate equilibrium outcomes. 
\cite{lanzetti2023interplay} move toward a more structural treatment by characterizing operator responses through optimization models, which makes the interaction mechanism more transparent and more portable across settings. 
Building on this line, \cite{zardini2021game, zardini2023strategic} introduce a municipal regulatory layer and analyze how taxes and price restrictions affect the equilibrium of multimodal transportation systems. 
\cite{dandl2021regulating} and \cite{he2025hierarchical} also place a regulator above lower-level mobility interactions and search over taxes, subsidies, or operational constraints through Bayesian or feedback-based optimization.

This literature makes an essential conceptual move: it acknowledges that realistic multimodal markets are decentralized and that municipal and operator objectives are misaligned~\cite{tirachini2020ride,hall2018uber}. 
However, the dominant methodological choice in this stream is to keep equilibrium as the solution concept. 
The regulator chooses a policy from an admissible action space, anticipates the equilibrium response of the operators, and then evaluates the induced equilibrium outcome. This creates three difficulties for the present problem.

First, the analysis depends on a strong behavioral hypothesis, namely that operators correctly reason through the strategic environment and settle on the equilibrium relevant for the regulator. 
Second, the resulting model is typically bilevel or tri-level and computationally difficult, especially once passenger choice and network congestion are embedded in the lower levels. 
In practice, this has led to discretization of the municipal action space, simulation-based search, Bayesian optimization, or feedback heuristics rather than constructive policy derivations with explicit optimality guarantees~\cite{zardini2021game,dandl2021regulating,he2025hierarchical}.
Third, equilibrium multiplicity can arise in networked games, making the regulator's evaluation sensitive to equilibrium selection rather than solely to the policy itself~\cite{milchtaich2005topological}.

Our paper departs from this literature in a substantive rather than merely algorithmic way. 
We do not improve equilibrium computation for a municipality--operator game. 
Instead, we change the regulatory question from ``which municipal action optimizes the equilibrium outcome?'' to ``what is the minimum transfer rule that implements a specified socially preferred operator profile?'' Once this question is adopted, the computational objects also change: the regulator no longer needs an outer search over municipal policies and an equilibrium correspondence, but instead one target problem and two unilateral deviation problems.

\subsection{Implementation and Incentive Design}
Beyond transportation-specific models, the paper is also connected to the implementation and incentive-design literature. 
\cite{monderer2003k} introduce $k$-implementation, in which a reliable interested party cannot redesign the game or prohibit actions, but can commit to nonnegative transfers contingent on the realized strategy profile. 
The objective is to implement a desired outcome while minimizing the realized payment. This perspective is conceptually different from the equilibrium-based regulatory papers above: the central object is not an equilibrium correspondence induced by a policy search, but the minimum transfer needed to make a target outcome robust to unilateral deviation under the assumption that agents do not play dominated strategies.

$k$-implementation is originally formulated for abstract games. To the best of our knowledge, this perspective has not been operationalized for multimodal \gls{acr:amod}/\gls{acr:pt} regulation. 
Our paper does not contribute new implementation theory. Instead, it shows how a classical implementation result can be embedded in transportation-specific network models with explicit passenger choice, endogenous congestion, and operator-specific service design.
This is precisely why the modular architecture introduced later is important: the social-optimum oracle is rooted in the centralized design literature, the two operator-deviation oracles connect to the decentralized strategic literature, and the payment formula comes from $k$-implementation.

\paragraph{Positioning of this paper.}
Relative to centralized multimodal design papers, we retain explicit mode choice and network-level service design but do not assume aligned operator objectives; the coordinated optimum is a target, not the realized market outcome. Relative to equilibrium-based regulatory papers, we model the same decentralized conflict between municipality and operators but replace equilibrium search with target implementation under a weaker behavioral requirement. Relative to the implementation literature, we provide a transportation-specific operationalization by coupling one target-oracle model with two operator best-response models and by developing tractable formulations with explicit exactness conditions or solution-quality certificates for the resulting subproblems. 
In this sense, the paper bridges three literatures that have largely developed separately.
\section{Preliminaries} \label{sec:pre}
\subsection{System Setup} \label{sec:setup}
We consider a multimodal transportation network served by an \gls{acr:amod} operator and a \gls{acr:pt} operator. 
Their service designs shape passenger behavior and, through that behavior, determine the performance of the overall system (i.e., passenger travel time and operation cost).

The \gls{acr:amod} service is operated on a strongly connected directed road~$\agraph = \tup {\anodes, \aedges}$, where~$\anodes$ is the set of road nodes and~$\aedges \subseteq \anodes \times \anodes$ is the set of directed road edges. 
For each edge~$e\in \aedges$, let~$b_e$ denote exogenous background traffic (i.e., nominal traffic conditions without the \gls{acr:amod} fleet).
We consider endogenous congestion triggered by the \gls{acr:amod} fleet by assuming flow-dependent travel time for each edge.
Road travel times depend on total vehicle flow through a nondecreasing function~$T:\mathbb{R}_{\geq 0}\to\mathbb{R}_{\geq 0}$.

The \gls{acr:pt} service is operated on a directed transit network~$\ptgraph=\tup{\ptnodes,\ptedges}$, where~$\ptnodes$ is the set of transit stations and~$\ptedges$ is the set of directed transit links. 
Let~$\ptl=\{l_1,\dots,l_n\}$ denote the set of transit lines. Each line~$l\in\ptl$ is an ordered sequence of stations,~$l=\tup{v^l_1,\dots,v^l_{n_l}},$ such that~$\tup{v^l_k,v^l_{k+1}}\in\ptedges$ for all~$k=1,\dots,n_l-1$.

We make the following assumption on the relation between~$\agraph$ and~$\ptgraph$.

\begin{assumption}[Disjoint driving and transit network]\label{assump:disjoint}
The driving network $\agraph$ and transit network $\ptgraph$ are disjoint, i.e.,~$\anodes \cap \ptnodes = \emptyset$ and $\aedges \cap \ptedges = \emptyset$.
\end{assumption}

\cref{assump:disjoint} excludes direct physical interaction between road traffic and transit operations. 
This assumption is appropriate when transit is operated on dedicated infrastructure, such as metro systems or dedicated bus lanes \cite{zardini2022analysis}.

The two service layers are connected by a walking network~$\wgraph=\tup{\wnodes,\wedges}$, which is assumed to be strongly connected. 
The walking layer intersects the road and transit layers at~$\mathcal{V}^{wa}:=\wnodes\cap\anodes,$ and~$\mathcal{V}^{wp}:=\wnodes\cap\ptnodes.$
We assume that all demand originates and ends at nodes in~$\wnodes$. 
Thus, a passenger traveling from origin~$o\in\wnodes$ to destination~$d\in\wnodes$ may access either service through the walking layer, traverse the corresponding service network, and then walk from the egress node to the final destination.

With the above definitions, we provide a definition of the multimodal transportation network that we use throughout the rest of the manuscript.

\begin{definition}[Multimodal transportation network]
A \emph{multimodal transportation network}~$\mathcal{G}=\tup{\mathcal{V},\mathcal{E}}$ consists of a driving network~$\agraph$, a transit network~$\ptgraph$, and a walking network~$\wgraph$. 
The walking network~$\wgraph$ shares nodes in common with~$\agraph$ and~$\ptgraph$, respectively, making~$\mathcal{G}$ a strongly connected graph routable between any two demand nodes.
\end{definition}

Travel demand is specified over \gls{acr:od} pairs on the walking layer.

\begin{definition}[Demand]
For any ordered pair of distinct nodes~$i,j \in \wnodes$, let~$d_{ij}:=\tup{i,j,\alpha_{ij}}$ denote the \emph{travel demand} from origin~$i$ to destination~$j$, where~$\alpha_{ij} \in \mathbb{R}_{>0}$ is the number of travelers wishing to move from~$i$ to~$j$. 
The total demand is denoted as a set
    \[D := \{\,d_{ij} \mid i,j \in \wnodes,\ i \neq j,\ \alpha_{ij} > 0\,\}.\]
    We define
    \[ \od := \{\tup{i,j} \in \mathcal{V}^\mathrm{w} \times \mathcal{V}^\mathrm{w} \mid d_{ij} \in D\}\]
    as the corresponding set of demand \gls{acr:od} pairs.
\end{definition}

\subsection{Problem Definition} \label{sec:problem}
\begin{figure}[tb]
    \centering
    \includegraphics[width=0.5\linewidth]{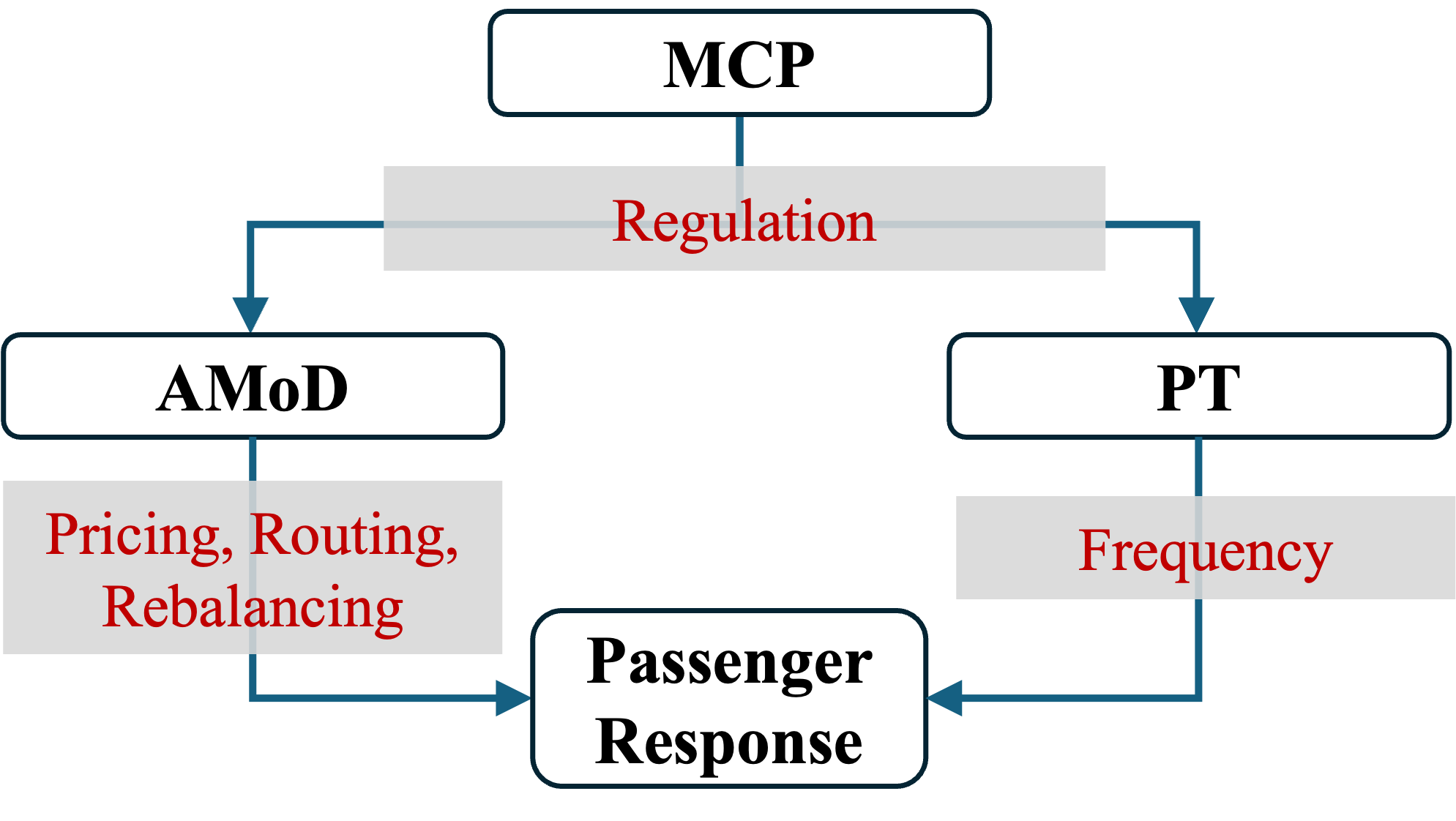}
    \caption{Structure of interaction among \gls{acr:mcp}, operators, and passengers.}
    \label{fig:interaction}
\end{figure}

The multimodal transportation system is shaped by three interacting entities: the \gls{acr:mcp}, the two operators, and the passengers. 
Their interaction architecture is summarized in \cref{fig:interaction}.
The operators design services, passengers respond to the resulting service attributes, and the \gls{acr:mcp} seeks to improve overall system performance through regulation.
Before formally discussing the problem arising from this interaction, we first define the behavior and decisions of each entity involved.

\paragraph{\gls{acr:amod}:} 
During operation, the \gls{acr:amod} operator chooses an \gls{acr:od}-specific price~$\pi_{od}$ for each~$\tup{o,d}\in\od$, assigns passengers to candidate road paths~$p\in P_a^{od}$, and determines rebalancing flows on the road network. 
We denote by~$x^a_{od,p}$ the number of passengers assigned to path~$p$, and by~$r_{ij}$ the rebalancing flow on road edge~$\tup{i,j}\in\aedges$. 
The path set~$P_a^{od}$ captures the routing menu available to the operator; for example, it may consist of the top-$k$ shortest paths between~$o$ and~$d$.

% During operation, the \gls{acr:amod} operator designs the price of the service and the route, both of which are \gls{acr:od}-dependent. Specifically, for each $\tup{o,d} \in \od$, the decisions that the \gls{acr:amod} operator needs to make include: 1) the price $\pi_{od}$ for the \gls{acr:amod} trip from $o$ to $d$ and 2) the distribution of the passengers who choose to use \gls{acr:amod} to different routes. This is represented by the number of passengers $x_{od,p}^a$ assigned to each of the possible path $p$ in an set of candidate routes $p \in P_a^{od}$. 
% The set $P_a^{od}$ is derived based on the customized routing preference of the \gls{acr:amod} operator, and it represents the different routing behavior that the \gls{acr:amod} can instruct on the vehicles when traveling between the same \gls{acr:od}. One example is to get the set $P_a^{od}$ as the top $k$ shortest paths from $o$ to $d$. Besides the price and route assignment, another important operation decision of \gls{acr:amod} is the rebalancing. Rebalancing is crucial to ensure efficient steady-state operation without vehicles accumulating at the destinations. The rebalancing decision is represented by the rebalancing flow $r_{ij}$ on each edge $\tup{i,j} \in \aedges$.

\paragraph{\gls{acr:pt}:} 
We focus on operational decisions and therefore treat the topology of the transit lines as fixed. 
The transit fare~$P^{pt}$ is also taken as fixed and exogenous during the operating horizon. 
This is natural for two reasons. 
First, fare design is typically a medium- to long-term planning decision. 
Second, transit fares are often constrained by equity, accessibility, and political considerations rather than short-run profit maximization~\cite{cats2014public,nuworsoo2009analyzing}. 
The operational decision of the transit operator is therefore the service frequency. 
For each line~$l\in\ptl$, the operator chooses a frequency~$s_l$ from a finite set of admissible levels~$S:=\{\overline{s}_1,\dots,\overline{s}_n\}$.
The frequency affects both passenger waiting time and service capacity.

% Since we focus on the interaction during operation, we assume that the network topology of the \gls{acr:pt} lines is already fixed and given. We also assume \gls{acr:pt} charge a fixed unit price $P^{pt}$. Passengers pay $P^{pt}$ every time they use transit, and they do not need to repay when transferring. We assume a fixed and given transit fare for two reasons. First, transit fare design is a long-term planning task, and typically does not change during operation. Second, while the price design for \gls{acr:amod} is often used to improve the profit and utility of the operator, the transit fare design is more closely related to social equity and accessibility, and is subject to strict regulation~\cite{cats2014public,nuworsoo2009analyzing}. Therefore, the price design is not in full control of the \gls{acr:pt} operator for utility maximization during operation time. What the transit operator can design during operation is the frequency $s_l$ for each line $l \in \ptl$, where $s_l$ is picked from a set of possible frequencies $S:=\{\overline{s}_1,\dots,\overline{s}_n\}$. The frequency influences the expected waiting time of the passengers and the capacity of the transit service.

\paragraph{Passengers:} 
Given the service decisions of the two operators, passengers choose between \gls{acr:amod} and \gls{acr:pt}. 
Let~$t^a_{od}$ and~$t^{pt}_{od}$ denote the generalized travel times associated with the two modes for \gls{acr:od} pair $\tup{o,d}$. 
Let~$\mathrm{VOT}>0$ denote the value-of-time parameter used to convert time into generalized monetary cost. 
The utilities of the two alternatives are
\begin{equation*}
\begin{aligned}
V^a_{od}&=-\mathrm{VOT}\cdot t^a_{od}-\pi_{od},\\
V^{pt}_{od}&=-\mathrm{VOT}\cdot t^{pt}_{od}-P^{pt}.
\end{aligned}
\end{equation*}

Passenger mode choice follows a binary multinomial logit model~\cite{mcfadden1972conditional}:
\[
\theta^i_{od}
=
\frac{\exp(V^i_{od})}{\exp(V^a_{od})+\exp(V^{pt}_{od})},
\qquad
i\in\{\mathrm{a},\mathrm{pt}\}.
\]

% With the design decisions of the \gls{acr:amod} and \gls{acr:pt} operators, the passengers make mode choice based on the utility of the two alternatives. Denote the travel time by \gls{acr:amod} and \gls{acr:pt} as $t^a_{od}$ and $t^{pt}_{od}$. The utility of each alternative is calculated based on the travel time and monetary cost, expressed in the following format:
% \[V_{od}^{a} = -\alpha\cdot t^a_{od}-\pi_{od};\qquad V_{od}^{pt} = -\alpha\cdot t^{pt}_{od}-P^{pt}\]
% where $\alpha$ is the value of time that transforms time to monetary value. Based on random utility theory, the choice probability of each alternative is then calculated by the \gls{acr:mnl} choice model~\cite{mcfadden1972conditional}:
% \[\theta_{od}^{i}=\frac{e^{V^i_{od}}}{e^{V^a_{od}}+e^{V^{pt}_{od}}}\]
% where $i \in \{\text{a},\text{pt}\}$.

\paragraph{\gls{acr:mcp}:} 
At the highest level, the \gls{acr:mcp} acts as a regulator that seeks to improve the overall efficiency of the multimodal transportation system. 
Social welfare in mobility can be quantified in various ways, such as maximizing consumer and producer surplus~\cite{small2024economics}, minimizing total system travel time~\cite{daganzo1997fundamentals}, or internalizing operational externalities~\cite{parry2007automobile}. 
At this stage, we keep the municipal objective abstract; a concrete social-cost specification is introduced later when we instantiate the framework.

As established in \cref{sec:review}, the standard approach to aligning self-interested operator behavior with \gls{acr:mcp} objective relies on equilibrium-based formulations. Next, we review the behavioral assumptions and computational limitations inherent in these equilibrium models, which clarifies the motivation for the optimal implementation-based framework developed later.

\subsection{Game-theoretic Model for Interaction}

A natural benchmark is a hierarchical game in which the municipality commits to a regulatory action, the two operators respond strategically, and passengers react to the resulting service attributes.

\begin{definition}[Multimodal transportation game]\label{def:multigame}
Let~$A^{m}$ denote the set of admissible \gls{acr:mcp} intervention rules.
In the setting considered here, an element~$a^m\in A^m$ may be interpreted as a subsidy or transfer scheme. 
Let~$A^a$ and~$A^{pt}$ denote the action spaces of the \gls{acr:amod} and \gls{acr:pt} operators, respectively, and let~$A^p$ denote the space of feasible passenger response profiles. 
Passenger behavior is captured by a reaction function~$f:A^a\times A^{pt}\to A^p,$ which maps operator actions to passenger responses. 
Substituting the induced passenger response into the payoffs yields reduced-form utility functions
\[
U^i:A^m\times A^a\times A^{pt}\to\mathbb{R},
\qquad
i\in\{m,a,pt\}.
\]

Given a \gls{acr:mcp} action~$a^m\in A^m$, the \gls{acr:amod} and \gls{acr:pt} operators choose actions~$a^a\in A^a$ and~$a^{pt}\in A^{pt}$ to maximize their respective utilities.
\end{definition}

Under the equilibrium-based perspective, the municipality chooses a regulatory action while anticipating the equilibrium response of the two operators.

\begin{definition}[Nash equilibrium of the operator game] \label{def:nash} 
Fix a \gls{acr:mcp} action~$a^m \in A^m$.
An action profile~$\tup{a^{a*},a^{pt*}}\in A^a\times A^{pt}$ is a \emph{Nash equilibrium} of the induced operator game if neither operator can improve its utility through unilateral deviation:
\begin{equation}\label{eq:nash}
\begin{aligned}
    U^a(a^m,a^a,a^{pt*}) &\leq U^a(a^m,a^{a*},a^{pt*}) \forall a^a \in A^a, \\
    U^{pt}(a^m,a^{a*},a^{pt}) &\leq U^{pt}(a^m,a^{a*},a^{pt*}) \forall a^{pt} \in A^{pt}. 
\end{aligned}
\end{equation}
\end{definition}

Let $g(a^m) := \{(a^{a*}, a^{pt*}) \mid (a^{a*}, a^{pt*}) \text{ satisfies condition } \eqref{eq:nash}\}$ denote the mapping from a \gls{acr:mcp} regulatory action to the set of induced Nash equilibria.
The benchmark regulatory problem is then the bilevel program below.
\begin{definition}[Equilibrium-based regulatory problem] \label{def:bilevel}
    Under the equilibrium-based formulation, the \gls{acr:mcp} seeks a regulatory action that maximizes its utility subject to the lower-level operator game reaching a Nash equilibrium:
    \begin{equation}
        \max_{a^m \in A^m} U^m(a^m,a^a,a^{pt}) \quad \text{s.t.}  \quad (a^a,a^{pt})\in g(a^m).
    \end{equation}
\end{definition}

While intuitive and generalizable, this formulation raises several limitations regarding both behavioral rationality and computational tractability for the regulatory design problem considered here.

First, its behavioral content is demanding.
The Nash-equilibrium interpretation requires the operators to reason correctly about each other's choices and to coordinate on an exact equilibrium point, assumptions that are often difficult to justify in complex transportation markets~\cite{simon1955behavioral,mahmassani1987boundedly}.
In complex, dynamic urban networks, computing such an exact equilibrium is often practically intractable~\cite{daskalakis2009complexity}.

Second, the equilibrium-based models result in a bilevel optimization problem, which is NP-hard in general. 
Specifically, the equilibrium constraint set $g(a^m)$ is inherently non-convex due to the complex choice behaviors embedded in the passenger reaction function $f$. This non-convexity precludes the direct application of standard optimization techniques, such as equivalent reformulations via \gls{acr:kkt} conditions and variational inequalities. 
Consequently, previous studies are forced to simplify the problem by coarsely discretizing the \gls{acr:mcp}'s action space~\cite{zardini2021game}, or by relying on approximation heuristics such as Bayesian~\cite{dandl2021regulating} or feedback optimization~\cite{he2025hierarchical}, which fail to guarantee global optimality or strict bounds on the solution quality.

Third, the equilibrium mapping~$g(a^m)$ in the operator game frequently yields multiple equilibria, and the players cannot always coordinate on the Pareto-efficient outcome~\cite{cooper1990selection}. 
Therefore, in the presence of multiple equilibria, a rigorous regulatory design must evaluate the entire set of possible outcomes to properly bound the system's performance and anticipate worst-case scenarios. 
However, previous studies on multimodal transportation games predominantly ignore this reality, implicitly assuming the uniqueness of the equilibrium without justifying an equilibrium-selection rule. 
This also reveals a central limitation of the equilibrium-based models in regulation design: the \gls{acr:mcp} is not concerned with one arbitrarily selected equilibrium, but seeks an operating point robust to unilateral deviations by self-interested operators.

These limitations motivate a different use of the municipal action space~$A^m$. 
Rather than optimizing over~$A^m$ through the bilevel benchmark in \cref{def:bilevel}, we instead fix a desired operator outcome and ask whether one can construct an intervention~$a^m\in A^m$ that makes unilateral deviation unattractive. 
This is precisely the perspective of $k$-implementation~\cite{monderer2003k}.

% These limitations motivate an implementation-based perspective to model the regulation design problem. Rather than solving for an equilibrium of the decentralized operator game, we ask what regulatory transfers are needed to make a target operating point robust to unilateral deviations by self-interested operators. To formalize this idea, we use k-implementation theory~\cite{monderer2003k}. 
%The next section introduces the key concepts and main results from the $k$-implementation theory, and explains why this framework is well-suited to the present multimodal transportation setting.

\subsection{$k$-implementation theory}
To circumvent the limitations of equilibrium search, our framework replaces the equilibrium computation with a weaker behavioral requirement and a more direct design question: given a desired target outcome, what incentives must an interested party provide so that each agent finds deviation unattractive?

Consider a strategic-form game among a set of agents~$N = \{1,2,\dots,n\}$ and pure-strategy space~$X = X_1 \times X_2 \times \dots X_n$, where~$X_i$ is the set of pure strategies of agent $i$. 
Each agent has a utility function~$U_i: X \rightarrow \mathbb{R}$. 
Denote a game~$G$ with payoff functions~$U = \tup{U_1, \dots, U_n}$ as $G(U)$. 
For each agent~$i$, let~$X_{-i}$ denote the strategy profiles of all other agents. 
The agent's strategies are comparable through the following dominance relationship.
\begin{definition}[Dominance]
In a game~$G(U)$, a strategy~$x_i \in X_i$ \emph{dominates} another strategy~$x_i'\in X_i$ for agent~$i$ if 
\[
U_i(x_i,x_{-i})\geq U_i(x_i',x_{-i})
\qquad
\forall x_{-i}\in X_{-i},
\]
and the inequality is strict for at least one~$x_{-i}\in X_{-i}$. 
A strategy is dominant if it dominates every other strategy in~$X_i$.
\end{definition}

In $k$-implementation, an interested party cannot redesign the game, forbid actions, or directly enforce behavior, but it can commit to transfers contingent on the realized strategy profile. 
These transfers are represented by a vector~$V=\tup{V_1,\dots,V_n}$, where each~$V_i:X\to\mathbb{R}_{\geq 0}$ is the payment promised to player~$i$. 
The transfers modify the game from~$G(U)$ to~$G(U+V)$. 
The framework relies on two assumptions. 
First, the interested party is credible, so the agents trust that promised payments will be realized. 
Second, the behavioral assumption on agents is minimal: agents do not play dominated strategies. 
Under this perspective, a target outcome is implemented if, in the modified game $G(U+V)$, each rational agent must select the strategy in the target outcome. For the singleton desired outcome case that is relevant here, a successful implementation amounts to constructing transfers that make each target strategy dominant.

Let $z=(z_1,\dots,z_n)\in X$ denote a desired target profile. The objective of the interested party is to design transfers that implement $z$ while minimizing the realized payment at the target. 
The following result, adapted from \cite{monderer2003k}, provides the characterization used in this paper.

\begin{theorem}[Singleton $k$-implementation~\cite{monderer2003k}]\label{theorem:k}
Let $G(U)$ be a finite game with at least two strategies for every agent.
Then every strategy profile $z$ admits an optimal implementation, and the minimum realized payment required to implement $z$ is
    \begin{equation}\label{eq:kz}
        k(z) = \sum_{i=1}^n \max_{x_i \in X_i}\Big(U_i(x_i,z_{-i})-U_i({z_i,z_{-i}})\Big).
    \end{equation}
For each agent $i$, the corresponding payment committed at the target is
\[
\max_{x_i\in X_i}\Big(U_i(x_i,z_{-i})-U_i(z_i,z_{-i})\Big).
\]
\end{theorem}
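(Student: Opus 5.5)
The plan is to prove the formula by matching a lower bound with a construction. I will work with the implementation notion stated just before the theorem: a transfer vector $V=(V_1,\dots,V_n)$ with $V_i:X\to\mathbb{R}_{\geq 0}$ implements the singleton $z$ if, in $G(U+V)$, $z_i$ is a dominant strategy for every agent $i$. Its realized cost is $\sum_i V_i(z)$. Finiteness of $G(U)$ guarantees that every $\max_{x_i\in X_i}$ below is attained. Since $x_i=z_i$ is admissible, each deviation gain
\[
\Delta_i:=\max_{x_i\in X_i}\big(U_i(x_i,z_{-i})-U_i(z)\big)
\]
is nonnegative. I will also check once that a dominant $z_i$ is the unique undominated strategy, so agents who avoid dominated strategies must play $z$. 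Every other strategy is dominated by $z_i$. And $z_i$ cannot itself be dominated by some $x_i$: that would give $U_i+V_i$ at $x_i$ at least its value at $z_i$ for all $x_{-i}$, with strict inequality somewhere. This contradicts $z_i$ dominating $x_i$, which already forces $U_i+V_i$ at $z_i$ to be at least its value at $x_i$ everywhere.

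\emph{Lower bound.} Let $V$ be any implementation. Fix $i$ and any $x_i\in X_i$. Dominance of $z_i$, evaluated at the opponent profile $x_{-i}=z_{-i}$, gives
\[
U_i(z)+V_i(z)\geq U_i(x_i,z_{-i})+V_i(x_i,z_{-i}).
\]
Since $V_i(x_i,z_{-i})\geq 0$, it follows that $V_i(z)\geq U_i(x_i,z_{-i})-U_i(z)$. Maximizing over $x_i$ yields $V_i(z)\geq\Delta_i$, and summing over $i$ gives $\sum_i V_i(z)\geq k(z)$. Note that nonnegativity of transfers is exactly what makes payments off the target path useless for lowering this bound.

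\emph{Construction.} Pick any constant $c>0$ and set $V_i(x_i,x_{-i})=0$ whenever $x_i\neq z_i$. On the target strategy set
\[
V_i(z_i,x_{-i})=\max_{y_i\in X_i}U_i(y_i,x_{-i})-U_i(z_i,x_{-i})+c\,\mathbf{1}[x_{-i}\neq z_{-i}].
\]
These values are nonnegative, and $V_i(z)=\Delta_i$, so the realized payment equals $k(z)$. For every $x_{-i}$ and every $x_i\neq z_i$, the modified payoff of $z_i$ equals $\max_{y_i}U_i(y_i,x_{-i})+c\,\mathbf{1}[x_{-i}\neq z_{-i}]$. This is at least $U_i(x_i,x_{-i})$, which is the modified payoff of $x_i$. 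Hence the weak inequality holds everywhere.

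\emph{Main obstacle: strictness.} Dominance also requires strict inequality at some $x_{-i}$, and at $x_{-i}=z_{-i}$ we may have equality by design. Strictness must therefore come from the bonus $c$ at some $x_{-i}\neq z_{-i}$. Such a profile exists because every other agent has at least two strategies. This is precisely where the hypothesis ``at least two strategies for every agent'' is used, together with $n\geq 2$, which I will assume as the setting of the theorem (our application has two operators). With $n=1$ the opponent profile space is a singleton, and the infimum $k(z)$ would be approached but not attained. Combining the construction with the lower bound shows that an optimal implementation exists and costs exactly $k(z)$, with agent $i$ paid $\Delta_i$ at the target.
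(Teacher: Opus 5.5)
Your proof is correct and is essentially the standard argument of Monderer and Tennenholtz, which the paper cites without reproducing. The two halves are: the per-agent lower bound $V_i(z)\ge\Delta_i$, obtained from dominance at $x_{-i}=z_{-i}$ together with $V_i(x_i,z_{-i})\ge 0$; and a matching construction that pays nothing off $z_i$, tops $z_i$ up to the best-response value, and adds a positive bonus off $z_{-i}$ to secure strictness. Your remark that $n\ge 2$ is implicitly needed is right, and the paper's statement omits it; however, for $n=1$ the minimum fails to be attained exactly when some $x_1\neq z_1$ satisfies $U_1(x_1)\ge U_1(z_1)$, not in every case.
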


\cref{theorem:k} is central to our methodology because it converts the implementation problem into a collection of unilateral deviation calculations. 
For each agent $i$, the term
\[
\max_{x_i\in X_i}\Big(U_i(x_i,z_{-i})-U_i(z_i,z_{-i})\Big)
\]
is exactly the gain from player $i$'s best deviation when all other players remain at the target profile. 
The minimum realized payment is therefore the sum of these deviation gains across players.
In other words, computing the implementation payment does not require solving for an equilibrium of the decentralized game. Instead, it requires: (i) specifying the target profile $z$, and (ii) evaluating each agent's unilateral deviation payoff (best response) relative to that target.

This observation is the key bridge to our transportation setting. 
In our model, the municipality is the interested party, and the target profile $z \in A^a \times A^{pt}$ is the socially preferred pair of operator actions. The municipal action space $A^m$ is the set of admissible subsidies. The implementation perspective therefore does not remove the municipality's action from the model; rather, it replaces the outer optimization over $A^m$ in \cref{def:bilevel} with a constructive characterization of the minimal subsidy associated with a prescribed target $z$.

% Specifically, an action in $A^m$ is a transfer function $V = (V_a, V_{pt})$, where $V_a : A^a \times A^{pt} \rightarrow \mathbb{R}_{\ge 0}$ and $V_{pt} : A^a \times A^{pt} \rightarrow \mathbb{R}_{\ge 0}$ map any realized joint operator action profile to non-negative subsidies for the AMoD and PT operators, respectively.

This is what motivates the modular architecture in the next section. 
One model computes the target operator profile, while separate best-response models compute the unilateral deviation terms appearing in \cref{eq:kz}. 
Once these deviation values are known, the corresponding implementing municipal subsidy can be derived directly, without solving the bilevel benchmark in \cref{def:bilevel}.

We focus on pure strategies because the operator decisions of interest, i.e., prices, service frequencies, routing instructions, and rebalancing plans, are announced and implemented deterministically over the planning horizon. 
This modeling choice is also consistent with the transportation service-design literature, where stochasticity typically enters through passenger choice and congestion rather than through mixed operator strategies~\cite{bertsimas2020joint,banerjee2025plan}. 
We therefore use $k$-implementation as the organizing principle for our regulation design framework and operationalize it through unilateral-deviation optimization problems in the next section.
\section{Methodology}\label{sec:Method}
In \cref{sec:pre}, we formalized the regulatory design problem and introduced the $k$-implementation perspective. 
We now instantiate that perspective for the multimodal transportation setting. 
The key modeling step is to separate the problem into two parts: First, the municipality identifies the operator profile it wishes to implement. Second, it computes a transfer rule that compensates each operator for its best unilateral deviation from that profile. This separation mirrors \cref{theorem:k} and avoids the outer search over $A^m$ that would arise in the bilevel equilibrium model.

\subsection{Overview of the Framework}\label{sec:overview}
\begin{figure}[tb]
    \centering
    \includegraphics[width=0.5\linewidth]{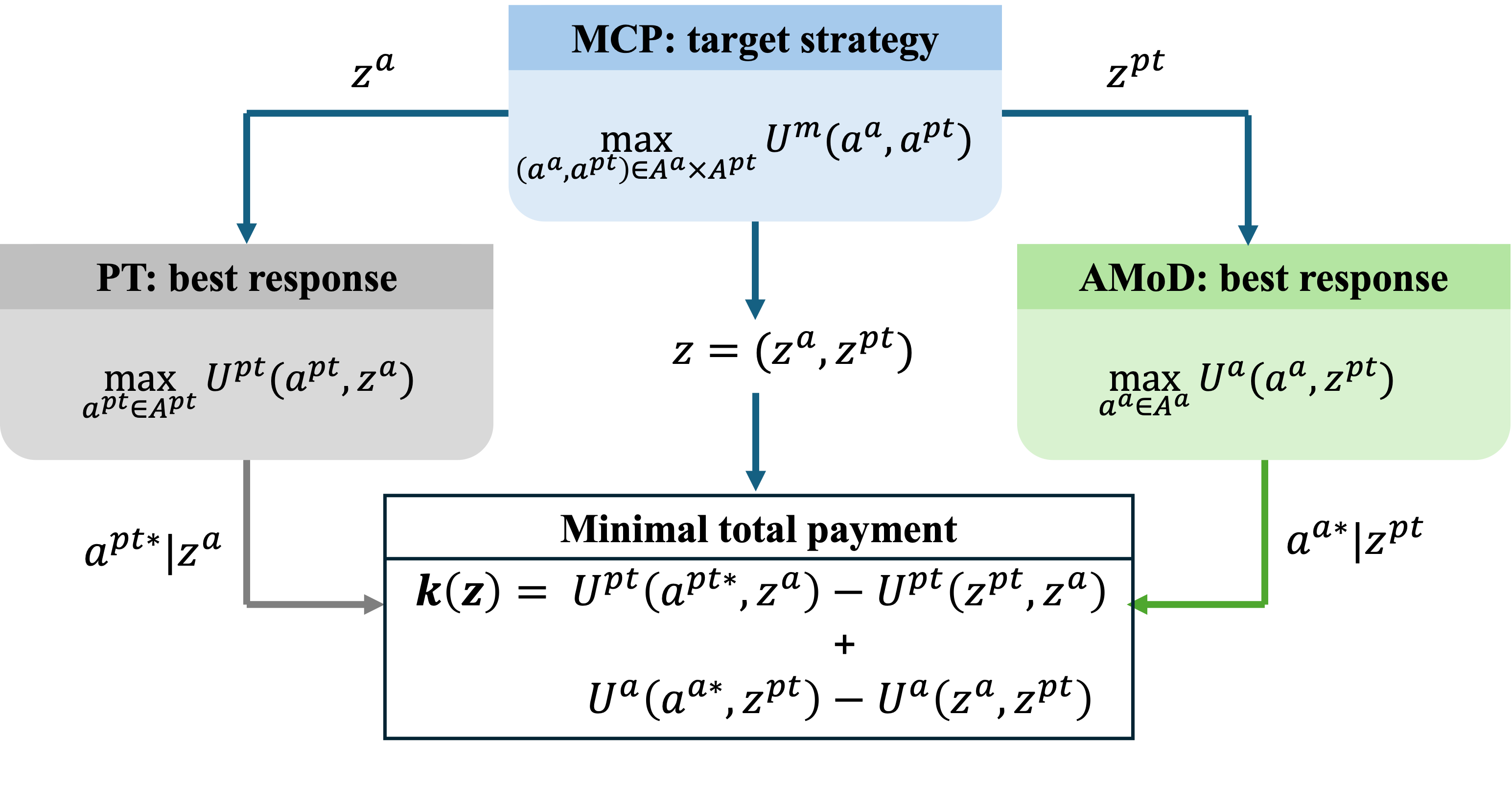}
    \caption{The $k$-implementation-based framework.
    The target oracle computes a socially preferred operator profile $z=(z^a,z^{pt})$. 
    The corresponding municipal action is then derived constructively as an implementing transfer rule $a^m(z)\in A^m$ from the deviation values of the two operator oracles. Thus, the municipality is not removed from the model; rather, its action is obtained directly from the implementation logic instead of through an outer-level bilevel optimization.}
    % The utility function of \gls{acr:mcp} is denoted as $U^m(a^a,a^{pt})$ instead of $U^m(a^m, a^a,a^{pt})$ because it is used to evaluate and select the target operator strategy profile $z$ that the \gls{acr:mcp} seeks to implement. In other words, at this stage the municipality is not modeled as choosing a strategic action in the game, but as identifying the socially preferred operator outcome.}
    \label{fig:framework}
\end{figure}

A summary of the proposed framework is presented in \cref{fig:framework}. 
The overall interaction is decomposed into three computationally independent components: a target-strategy oracle for the municipality, an \gls{acr:amod} best-response oracle, and a \gls{acr:pt} best-response oracle.
The target oracle computes a desired operator profile~$z=(z^a,z^{pt}),$ where~$z^a$ denotes the target \gls{acr:amod} action and~$z^{pt}$ denotes the target \gls{acr:pt} action.

Conditional on this target, the two operator oracles compute the unilateral deviation values required by \cref{theorem:k}. Specifically, define
\begin{align}
\widehat U^a(z) &:= \max_{a^a\in A^a} U^a(a^a,z^{pt}), \label{eq:amod_dev_value}\\
\widehat U^{pt}(z) &:= \max_{a^{pt}\in A^{pt}} U^{pt}(a^{pt},z^a). \label{eq:pt_dev_value}
\end{align}
These quantities are the utilities attained by the two operators when each is allowed to deviate unilaterally while the other remains at the target action. 
The associated deviation gains are
\begin{align}
\Delta^a(z) &:= \widehat U^a(z)-U^a(z^a,z^{pt}), \label{eq:delta_a}\\
\Delta^{pt}(z) &:= \widehat U^{pt}(z)-U^{pt}(z^{pt},z^a). \label{eq:delta_pt}
\end{align}
Because the present application has exactly two strategic operators, \cref{theorem:k} yields
\begin{equation}
k(z)=\Delta^a(z)+\Delta^{pt}(z). \label{eq:k_framework}
\end{equation}

\cref{eq:k_framework} has two immediate consequences. 
First, it gives the minimum realized payment required to implement the target profile $z$. 
Second, it induces a constructive municipal action in $A^m$: a target-contingent transfer rule that compensates each operator for its best unilateral deviation gain. 
In other words, the implementation perspective does not eliminate the municipal action; it replaces the outer optimization over $A^m$ with a direct derivation of the smallest implementing action associated with a prescribed target.

A major advantage of this architecture is modularity. The implementation logic is fixed, but the three component models are replaceable. 
The framework only requires: (i) a target oracle that computes a socially preferred operator profile, and (ii) two deviation oracles that compute unilateral best responses around that profile. 
This makes the methodology adaptable to alternative behavioral models, profit specifications, and network representations whenever those are more appropriate for a particular application.

The quantity $k(z)$ is also informative beyond subsidy design. 
In the present setting, it measures the incentive gap between the socially desired operating point and the operators' privately preferred actions. 
A small value of $k(z)$ indicates that the target is already close to being aligned with private incentives, whereas a large value indicates that substantial regulatory effort is required to implement the target. 
The implementation payment therefore serves both as a policy instrument and as a diagnostic of incentive misalignment in the multimodal transportation system.

In the remainder of this section, we instantiate the three components in \cref{fig:framework}. 
The target oracle is formulated as a joint multimodal system-design problem for the social optimum. 
The two deviation oracles are then formulated as an \gls{acr:amod} profit-maximization problem and a \gls{acr:pt} frequency-design problem, respectively. 
Together, these three modules operationalize the implementation framework for multimodal regulations.

\subsection{Target Strategy: Joint Optimization for the Social Optimum} 
\label{sec:so}

This subsection instantiates the target-strategy oracle in \cref{fig:framework}. We first define the \gls{acr:mcp} utility $U^m(a^a,a^{pt})$ through a social-cost model over the multimodal network. Next, we solve the model by exploiting its structure to compute a target profile $z=(z^a,z^{pt})$ without carrying the full logit constraint in the main optimization. At this stage no municipal transfer is computed. 
The role of the target oracle is solely to determine the desired operating point; the implementing municipal action is derived later from the two deviation oracles through \cref{theorem:k}.

As discussed in \cref{sec:review}, joint multimodal optimization has been widely studied under the assumption that the two operators are either cooperative or centrally controlled. 
The present target oracle is motivated by the same planning perspective, but it retains three ingredients that are essential for the implementation framework developed here: operator-specific service decisions, explicit passenger mode choice, and endogenous road congestion induced by \gls{acr:amod} fleet operations.

% Consider a \gls{acr:mcp} who wants to jointly optimize the \gls{acr:amod} and \gls{acr:pt} service for better social welfare. As discussed in \cref{sec:review}, this joint optimization problem has been studied by previous research on modeling the multimodal optimization problem under cooperative transit and \gls{acr:amod} operators. However, these studies either overlook the different service design of the two operators or ignore the endogenous passenger choice behavior affected by the service design. Motivated by these gaps, in this section, we formulate a joint optimization model for a multimodal transportation system that combines three features: operator-specific service design, explicit passenger mode choice behavior, and endogenous road congestion induced by \gls{acr:amod} fleet operations. The system setup and decision variables of the \gls{acr:amod} and \gls{acr:pt} operators follow the structure introduced in \cref{sec:problem}. We next summarize these setups and present the full target-strategy model.

\subsubsection{Problem formulation} \label{sec:so_problem}
Consider a multimodal transportation system with \gls{acr:od} set~$\od$ and demand volume~$\alpha_{od}$ for each~$(o,d)\in\od$. 
For each~$(o,d)\in\od$, the \gls{acr:amod} operator chooses an \gls{acr:od}-specific price~$\pi_{od}$ and assigns passengers to candidate routes~$p\in P_a^{od}$. 
Let~$x_{od,p}^a$ denote the number of passengers assigned to route~$p$, and let~$r_{ij}$ denote the steady-state rebalancing flow on road edge~$(i,j)\in\aedges$. 
The \gls{acr:pt} operator charges a fixed fare~$P^{pt}$ and chooses a service frequency~$s_l$ for each line~$l\in\ptl$, where~$s_l$ is selected from a discrete set~$S$.

For each~$(o,d)\in\od$, we represent the transit alternative by a precomputed reference path~$p_{od}^{pt}$ that includes access walking, transfers, and in-vehicle travel. 
This fixed-path representation is standard in multimodal network optimization and avoids the computational burden of endogenous multi-path transit assignment while retaining the dominant determinants of transit generalized cost, namely transfers, access/egress distance, and service frequency~\cite{salazar2019intermodal,bertsimas2020joint,farahani2013review}.

\begin{remark}[Single-Path Transit Assumption and Tractability]\label{remark:routing_asymmetry}
Relying on a single reference path $p_{od}^{pt}$ for transit is a simplification, particularly in dense networks. This contrasts with the \gls{acr:amod} layer, which accommodates multiple routes. This modeling asymmetry reflects fundamental operational differences: \gls{acr:amod} routing is centrally dictated by the operator, allowing it to be modeled as a convex flow assignment since passengers are indifferent to the specific physical path. 

While centrally dictated transit routing would also preserve convexity, realistic modeling requires acknowledging that transit passengers navigate independently. Accommodating multiple transit paths necessitates a decentralized route-choice mechanism (e.g., nested logit). Embedding this within the broader mode-choice constraints introduces intractable non-convexities that strictly preclude the linearizations and convexifications developed in subsequent sections. Thus, the single-path assumption is a necessary methodological trade-off to ensure global solvability while capturing the distinct behavioral paradigms of both modes.
\end{remark}

% Consider a multimodal transportation system with demand set~$\od$ and demand rate $\alpha_{od}$ for each $(o,d) \in \od$. For each $(o,d) \in \od$, the \gls{acr:amod} operator chooses an OD-specific price $\pi_{od}$ and assigns passengers to candidate routes $p\in P_a^{od}$. Let $x_{od,p}^a$ denote the number of passengers assigned to route $p$, and let $r_{ij}$ denote the steady-state rebalancing flow on road edge $(i,j)\in\aedges$. The \gls{acr:pt} operates at a fixed fare $P^{pt}$ and chooses a service frequency $s_l$ for each line $l\in\ptl$, where $s_l$ is selected from a discrete set $S$. For each $(o,d) \in \od$, we represent the transit alternative by a pre-calculated reference path $p^{pt}_{od}$ that includes walking, transfers, and in-vehicle travel. We assume that passengers choosing the public transit mode follow this reference path $p^{pt}_{od}$. The fixed-route assumption is a standard convention in multimodal network optimization \cite{salazar2019intermodal,bertsimas2020joint,farahani2013review}. In our setting, we assume that transit in-vehicle travel times are not influenced by congestion. When transit passengers select the path that minimizes their generalized travel cost, the path is largely determined by the static network topology, i.e., through the number of transfers and walking distance. Therefore, pre-calculating the reference route preserves behavioral realism while mitigating the computational intractability associated with endogenous multi-path transit assignment.

\paragraph{Road-flow and rebalancing constraints.}
Let~$f_{ij}$ denote the total \gls{acr:amod} vehicle flow on road edge~$(i,j)\in\aedges$. 
This flow consists of passenger-carrying vehicles and rebalancing vehicles:
\begin{equation}\label{constr:flow}
f_{ij}
=
r_{ij}
+
\sum_{(o,d)\in\od}\sum_{p\in P_a^{od}} x_{od,p}^a \delta_{ij}^p,
\qquad
\forall (i,j)\in\aedges,
\end{equation}
where~$\delta_{ij}^p=1$ if edge~$(i,j)$ belongs to path~$p$, and~$\delta_{ij}^p=0$ otherwise.

Let~$e^-(v)$ and~$e^+(v)$ denote the sets of incoming and outgoing road edges incident to node~$v\in\anodes$.
Steady-state rebalancing requires flow conservation at every road node:
\begin{equation} \label{constr:conservation}
    \begin{aligned}
    \sum_{(i,v)\in e^-(v)} r_{iv}&-\sum_{(v,j)\in e^+(v)} r_{vj}
    = \sum_{(v,d)\in\od}\sum_{p\in P_a^{vd}}x_{vd,p}^a\\
    -&
    \sum_{(o,v)\in\od}\sum_{p\in P_a^{ov}}x_{ov,p}^a,
    \qquad \forall v\in\anodes
    \end{aligned}
\end{equation}

\paragraph{Transit capacity constraints.}
Transit capacity is limited on each transit edge:
\begin{equation}
    \sum_{(o,d)\in \od} \eta^{od}_ex^{pt}_{od} \leq C_l \sum_{l \in L}\mu^l_es_l, \quad \forall e \in \ptedges
    \label{constr:capacity}
\end{equation}
where $\eta_e^{od}=1$ if transit edge $e$ belongs to the reference path $p_{od}^{pt}$, $\mu_e^l=1$ if edge $e$ belongs to line $l$, and $C_l$ is the unit vehicle capacity of line $l$.

\paragraph{Demand split and utilities.}
Let $x_{od}^a$ denote total \gls{acr:amod} demand for OD pair $(o,d)$:
\begin{equation}
x_{od}^a=\sum_{p\in P_a^{od}}x_{od,p}^a,
\qquad \forall (o,d)\in\od.
\label{constr:a_total}
\end{equation}
The corresponding transit demand is
\begin{equation}
x_{od}^{pt}=\alpha_{od}-x_{od}^a,
\qquad \forall (o,d)\in\od.
\label{constr:demand_split}
\end{equation}

The expected \gls{acr:amod} travel time is the route-assignment-weighted average of realized path travel times:
\begin{equation}\label{constr:a_time}
\begin{aligned}
    t_{od}^a=
    \sum_{p\in P_a^{od}}
    \frac{x_{od,p}^a}{x_{od}^a}
    \sum_{(i,j)\in\aedges}T(b_{ij}+f_{ij})\delta_{ij}^p, \\ \forall (o,d)\in\od,
\end{aligned}
\end{equation}
where $b_{ij}$ denotes exogenous background traffic and $T(\cdot)$ is the road travel-time function. We use the standard BPR form
\[
T(q)=t_{ij}^0\left[1+\alpha\left(\frac{q}{c_{ij}}\right)^\beta\right],
\]
where $t_{ij}^0$ and $c_{ij}$ denote free-flow travel time and capacity on edge $(i,j)$.

The corresponding \gls{acr:amod} utility is
\begin{equation}\label{constr:a_v}
V_{od}^a=-\mathrm{VOT}\cdot t_{od}^a-\pi_{od},
\qquad \forall (o,d)\in\od.
\end{equation}

For transit, generalized travel time consists of fixed walking and in-vehicle time plus expected waiting time:
\begin{equation}
t_{od}^{pt}=\tau_{od}^{pt}+\frac{1}{2}\sum_{l\in\ptl}\frac{\gamma_{odl}}{s_l},
\qquad \forall (o,d)\in\od,
\label{constr:pt_time}
\end{equation}
where $\tau_{od}^{pt}$ is the fixed travel time along the reference transit path and $\gamma_{odl}=1$ if that path includes boarding or transfer to line $l$. The corresponding transit utility is
\begin{equation}
V_{od}^{pt}=-\mathrm{VOT}\cdot t_{od}^{pt}-P^{pt},
\qquad \forall (o,d)\in\od.
\label{constr:pt_v}
\end{equation}

\paragraph{Frequency selection and passenger choice.}
Transit frequencies are selected from the discrete set $S$. Let $z_{ls}\in\{0,1\}$ indicate whether frequency level $s\in S$ is chosen for line $l\in\ptl$. Then
\begin{align}
\sum_{s\in S} z_{ls} &= 1,
\qquad \forall l\in\ptl,
\label{constr:selection}\\
s_l &= \sum_{s\in S} s\, z_{ls},
\qquad \forall l\in\ptl.
\label{constr:freq}
\end{align}

Passenger mode choice follows a binary logit model:
\begin{equation}
x_{od}^a=\alpha_{od}\frac{e^{V_{od}^a}}{e^{V_{od}^a}+e^{V_{od}^{pt}}},
\qquad \forall (o,d)\in\od.
\label{constr:choice}
\end{equation}

All flow variables are nonnegative:
\begin{equation}
x_{od,p}^a,x_{od}^{pt},r_{ij}\ge 0,
\quad \forall (o,d)\in\od,\ (i,j)\in\aedges.
\label{constr:nonneg}
\end{equation}

\paragraph{Municipal utility.}
The \gls{acr:mcp}'s target is to minimize total social cost:
\begin{equation}\label{eq:so_obj}
    \begin{aligned}
        \min_{\substack{\boldsymbol{x}^a, \boldsymbol{x}^{pt}, \boldsymbol{f}, \\\boldsymbol{r}, \boldsymbol{\pi}, \boldsymbol{s}, \boldsymbol{z}}} & \quad \mathrm{VOT}_\mathrm{ENV}\sum_{(i,j)\in \aedges}(b_{ij} +f_{ij}) \cdot T(b_{ij} +f_{ij}) \\ 
        & + \mathrm{VOT} \sum_{(o,d) \in \od}x^{pt}_{od}t^{pt}_{od} \\
        & + \sum_{(i,j)\in \aedges}C_{ij}f_{ij} + \sum_{l \in \ptl}K_l s_l
    \end{aligned}
\end{equation}
subject to constraints \eqref{constr:flow}-\eqref{constr:nonneg}.

The first term in \eqref{eq:so_obj} measures road-system delay generated by total road traffic, including background traffic and \gls{acr:amod} fleet operations. The second term measures transit passenger time. The last two terms are \gls{acr:amod} and \gls{acr:pt} operating costs.

Formulation \eqref{constr:flow}-\eqref{eq:so_obj} presents a mixed-integer nonconvex optimization problem. The main sources of nonconvexity are the logit choice constraint \eqref{constr:choice} and the bilinear term $x_{od}^{pt}t_{od}^{pt}$ in the objective. 
We next show how the problem can be decomposed and reformulated into a \gls{acr:micp}.

\subsubsection{Decomposition, reformulation, and regularization}
A useful structural observation about the problem formulation presented in \cref{sec:so_problem} is that the price variables~$\pi_{od}$ affect the feasible mode split through \eqref{constr:choice}, but do not appear directly in the municipal objective \eqref{eq:so_obj}. 
This enables a two-step decomposition.

\paragraph{Step 1: Optimal flow assignment.}
We first remove the price variables and the logit choice constraint, and solve the reduced problem over the physical flow and service-design variables subject to \eqref{constr:flow}--\eqref{constr:demand_split}, \eqref{constr:capacity}, \eqref{constr:pt_time}, \eqref{constr:selection}--\eqref{constr:freq}, and \eqref{constr:nonneg}.
% We first solve a reduced problem obtained by removing the choice constraint \eqref{constr:choice} and the price variables~$\pi_{od}$. 
% The reduced problem optimizes the socially optimal flow assignment subject to \eqref{constr:flow}--\eqref{constr:demand_split}, \eqref{constr:pt_time}, \eqref{constr:selection}-\eqref{constr:freq}, and \eqref{constr:nonneg}.

\paragraph{Step 2: Price recovery.}
Given an optimal reduced solution, we recover an \gls{acr:amod} price vector that rationalizes the induced mode shares under the binary logit model.
Specifically, after computing~$t_{od}^{a*}$ from \eqref{constr:a_time} and~$t_{od}^{pt*}$ from \eqref{constr:pt_time}, the logit relation implies
\begin{equation}
\begin{aligned}
\pi_{od}^*
=
\mathrm{VOT}\,(t_{od}^{pt*}-t_{od}^{a*})
+P^{pt}
-\log\frac{x_{od}^{a*}}{x_{od}^{pt*}},\\ \forall (o,d)\in\od,    
\end{aligned}
\label{eq:price_recovery}
\end{equation}
where~$x_{od}^{a*},x_{od}^{pt*}$ are the optimal solutions from Step 1.

\begin{lemma}[Equivalence of Decomposition]\label{prop:dcomp}
An optimal solution of the reduced problem, together with prices recovered via \eqref{eq:price_recovery}, yields an optimal solution of the original formulation \eqref{constr:flow}--\eqref{eq:so_obj}.
\end{lemma}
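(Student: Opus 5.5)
The proof is a relaxation-plus-feasibility argument. Let $\mathcal{P}$ denote the original formulation \eqref{constr:flow}--\eqref{eq:so_obj} and $\mathcal{R}$ the reduced problem of Step 1. Two observations drive it. First, the objective \eqref{eq:so_obj} does not depend on $\boldsymbol{\pi}$. It is a function only of $(\boldsymbol{x}^a,\boldsymbol{x}^{pt},\boldsymbol{f},\boldsymbol{r},\boldsymbol{s},\boldsymbol{z})$, with $t^{pt}_{od}$ determined by $\boldsymbol{s}$ through \eqref{constr:pt_time}. Second, the constraints of $\mathcal{R}$ are exactly those of $\mathcal{P}$ with \eqref{constr:a_v} and \eqref{constr:choice} removed. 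These are the only constraints involving $\boldsymbol{\pi}$; the utility definitions \eqref{constr:pt_v} and the time definition \eqref{constr:a_time} are merely definitional. Hence the projection of the feasible set of $\mathcal{P}$ onto the non-price variables is contained in the feasible set of $\mathcal{R}$. It follows immediately that $\mathrm{OPT}(\mathcal{R})\le \mathrm{OPT}(\mathcal{P})$.

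The reverse step is to show that an optimal reduced solution $w^*$ of $\mathcal{R}$, augmented with $\pi^*$ from \eqref{eq:price_recovery}, is feasible for $\mathcal{P}$. Its objective value equals $\mathrm{OPT}(\mathcal{R})$, so it attains the lower bound and is optimal for $\mathcal{P}$. For feasibility, fix $(o,d)$ and substitute \eqref{eq:price_recovery} into \eqref{constr:a_v} and \eqref{constr:pt_v}. This gives
\[
V^a_{od}-V^{pt}_{od} = -\mathrm{VOT}\,t^{a*}_{od}-\pi^*_{od}+\mathrm{VOT}\,t^{pt*}_{od}+P^{pt} = \log\frac{x^{a*}_{od}}{x^{pt*}_{od}}.
\]
Therefore $e^{V^a_{od}}/(e^{V^a_{od}}+e^{V^{pt}_{od}}) = x^{a*}_{od}/(x^{a*}_{od}+x^{pt*}_{od}) = x^{a*}_{od}/\alpha_{od}$ by \eqref{constr:demand_split}, which is exactly \eqref{constr:choice}. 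The prices are otherwise unconstrained (no sign or bound constraints appear on $\boldsymbol{\pi}$), so every real value is admissible. All remaining constraints hold because $w^*$ is feasible for $\mathcal{R}$.

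The main obstacle is well-definedness of \eqref{eq:price_recovery} and \eqref{constr:a_time}. The recovered price requires $x^{a*}_{od}>0$ and $x^{pt*}_{od}>0$, and $t^{a*}_{od}$ divides by $x^{a*}_{od}$. Meanwhile $\mathcal{R}$ permits boundary splits with $x^{a*}_{od}\in\{0,\alpha_{od}\}$, which no finite price can rationalize under the logit model, whose shares are strictly interior. I would handle this as follows. First, I would show that any feasible point of $\mathcal{P}$ has interior shares, so $\mathcal{P}$ is the same as $\mathcal{R}$ restricted to $0<x^a_{od}<\alpha_{od}$. Then I would argue one of two things. Either the statement is understood for reduced optima with interior shares (the case in which \eqref{eq:price_recovery} is defined, and this is how I would phrase the lemma's hypothesis). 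Or, by continuity of the objective in the flows, the infimum of $\mathcal{P}$ equals $\mathrm{OPT}(\mathcal{R})$, and a boundary optimum is approached by interior feasible points with prices tending to $\pm\infty$. This degeneracy is precisely what motivates the entropy regularization introduced next, so I would flag the interiority condition explicitly. For an interior optimum the value of $t^{a*}_{od}$ is unambiguous, and the argument above closes.
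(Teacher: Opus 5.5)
Your proof is correct and follows the paper's own argument. Projecting the original feasible set onto the non-price variables lands inside the reduced feasible set with the same price-free objective, and the recovered prices invert the binary logit, so the reduced optimum becomes feasible, hence optimal, for the original problem. Your version is also more careful than the paper's in two respects: you verify explicitly that $V^a_{od}-V^{pt}_{od}=\log(x^{a*}_{od}/x^{pt*}_{od})$, and you state the interiority condition $0<x^{a*}_{od}<\alpha_{od}$, which the paper's proof assumes silently and without which the recovered price is undefined and the original problem's infimum need not be attained.
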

\begin{proof}{Proof}
The price variables enter the model only through the \gls{acr:amod} utility \eqref{constr:a_v} and the mode-choice constraint \eqref{constr:choice}; they do not appear in the objective \eqref{eq:so_obj}. 
Therefore, every feasible solution of the original problem projects to a feasible solution of the reduced problem with the same objective value. 
Conversely, if~$(x^{a*},x^{pt*},f^*,r^*,s^*,\ldots)$ solves the reduced problem, then for each OD pair the binary logit equation can be inverted to recover a price~$\pi_{od}^*$ satisfying \eqref{constr:choice}.
The recovered solution satisfies \eqref{constr:choice} and therefore is feasible for the original problem with unchanged objective value. Hence the decomposition preserves optimality.
\end{proof}

The reduced target problem remains nonconvex because the objective contains the bilinear term $x_{od}^{pt}t_{od}^{pt}$. 
To linearize this term, we introduce an auxiliary variable 
\begin{equation} \label{eq:so_aux}
    w_{od,l,s}=x^{pt}_{od}\cdot z_{ls}.
\end{equation}

The product \eqref{eq:so_aux} can be represented exactly by a big-M linearization. See details about the linearization constraints and proof of exactness in Appendix \ref{app:bigm}.

Using \eqref{constr:pt_time} and \eqref{eq:so_aux}, we rewrite the bilinear transit term as
\begin{equation}\label{eq:so_linear}
    \begin{aligned}
      x^{pt}_{od}t^{pt}_{od} &= x^{pt}_{od}(\tau_{od}^{pt}+\frac{1}{2}\sum_{l \in L} \frac{\gamma_{odl}}{s_l}) \\
        &= x^{pt}_{od}\tau_{od}^{pt} + \frac{1}{2} \sum_{l \in L, s \in S} \gamma_{odl} \cdot x^{pt}_{od} \cdot \frac{1}{s} \cdot z_{ls} \\
        & = x^{pt}_{od}\tau_{od}^{pt}+\frac{1}{2} \sum_{l \in L, s \in S} \gamma_{odl}\frac{1}{s}w_{od,l,s}  
    \end{aligned}
\end{equation}
Substituting \eqref{eq:so_linear} into \eqref{eq:so_obj} and removing \eqref{constr:pt_time} yields a \gls{acr:micp}, since the remaining nonlinear road-travel term is convex under the BPR function, and all remaining constraints are linear.

Although this decomposition removes the nonconvexity induced by the logit choice constraint, it introduces a practical issue in the price-recovery step. 
Because the reduced problem optimizes deterministic flow assignment, it can generate nearly all-or-nothing mode splits. Rationalizing such solutions under a random-utility choice model may require arbitrarily large or small prices. 
This reflects a mismatch between deterministic system-optimal assignment and stochastic passenger choice.
To mitigate this issue, we regularize the objective with an entropy term:
\begin{equation}
    \frac{1}{\theta}\sum_{(o,d)\in \od}x_{od}^a\ln x_{od}^a + x_{od}^{pt}\ln x_{od}^{pt} \label{eq:reg_ent}
\end{equation}
where $\theta>0$ controls the strength of regularization and we use the convention $0\ln 0:=0$.

The regularized target-oracle problem becomes
\begin{equation}\label{eq:so_obj_reg}
    \begin{aligned}
    \min_{\substack{\boldsymbol{x}^a, \boldsymbol{x}^{pt}, \\\boldsymbol{f}, \boldsymbol{r}, \boldsymbol{s}, \boldsymbol{z}}} & \quad \mathrm{VOT}_\mathrm{ENV} \sum_{(i,j)\in \aedges}(b_{ij} +f_{ij}) \cdot T(b_{ij} +f_{ij}) +\\ 
    & \mathrm{VOT} \sum_{(o,d)\in \od}(x^{pt}_{od}\tau_{od}^{pt}+\frac{1}{2} \sum_{l \in L, s \in S} \gamma_{odl}\frac{1}{s}w_{od,l,s}) \\
    &+ \sum_{(i,j)\in \aedges}C_{ij}f_{ij} + \sum_{l \in \ptl}K_l s_l \\
    &+ \frac{1}{\theta}\sum_{(o,d)\in \od}(x_{od}^a\ln x_{od}^a + x_{od}^{pt}\ln x_{od}^{pt})
    \end{aligned}
\end{equation}
\begin{lemma}[Logit Structure Induced by Entropy Regularization]\label{lemma:logit}
Fix a feasible binary frequency-selection vector $\boldsymbol{z}$. 
Let~$(\boldsymbol{x}^{a*},\boldsymbol{x}^{pt*},\boldsymbol{f}^*,\boldsymbol{r}^*,\boldsymbol{w}^*)$ solve the resulting continuous convex subproblem of \eqref{eq:so_obj_reg}. 
Then, for every~$(o,d)\in\od$, the optimal mode-share ratio satisfies
\[
\frac{x_{od}^{a*}}{x_{od}^{pt*}}
=
\exp\left(\theta\left(\mathrm{MSC}_{od}^{pt}-\mathrm{MSC}_{od}^{a}\right)\right),
\]
where $\mathrm{MSC}_{od}^{a}$ and $\mathrm{MSC}_{od}^{pt}$ denote the marginal system costs of assigning one additional traveler to \gls{acr:amod} and \gls{acr:pt}, respectively.
\end{lemma}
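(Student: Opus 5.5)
Fixing $\boldsymbol{z}$, the subproblem of \eqref{eq:so_obj_reg} is a continuous convex program with linear constraints, so the proof is a KKT argument. First we eliminate the auxiliary variables. With $\boldsymbol{z}$ binary, the big-M constraints force $w_{od,l,s}=x^{pt}_{od}z_{ls}$ exactly (Appendix \ref{app:bigm}). The transit term is therefore linear in $x^{pt}_{od}$, with coefficient $\mathrm{VOT}\,(\tau^{pt}_{od}+\tfrac12\sum_{l}\gamma_{odl}/s_l)=\mathrm{VOT}\,t^{pt}_{od}$, where $s_l$ is fixed by $\boldsymbol{z}$. Next we substitute $f_{ij}$ from \eqref{constr:flow}. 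This writes the objective as a convex function $\Phi$ of the path flows $x^a_{od,p}$ and the rebalancing flows $r_{ij}$, plus the linear transit cost, plus the entropy term.

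The objects in the statement are defined as follows. $\mathrm{MSC}^{a}_{od}$ is the marginal social cost of routing one more traveler by \gls{acr:amod}. It is the derivative of the non-entropy part of the objective along a used path $p$:
\[
\sum_{(i,j)}\delta^p_{ij}\,\partial_f\big[\mathrm{VOT}_\mathrm{ENV}(b+f)T(b+f)+C f\big]_{ij},
\]
plus the shadow costs of the rebalancing constraints \eqref{constr:conservation} at $o$ and $d$. $\mathrm{MSC}^{pt}_{od}$ is $\mathrm{VOT}\,t^{pt}_{od}$ plus the capacity shadow prices $\sum_e\eta^{od}_e\lambda_e$ from \eqref{constr:capacity}.

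We then write the Lagrangian. The demand split \eqref{constr:demand_split} gets multiplier $\nu_{od}$, \eqref{constr:conservation} gets $\rho_v$, \eqref{constr:capacity} gets $\lambda_e\ge0$, and nonnegativity gets the usual multipliers. Since the subproblem is convex with affine constraints, Slater's condition is not needed and KKT is necessary and sufficient. We show positivity of both mode flows by noting that $\partial_x(x\ln x)\to-\infty$ as $x\downarrow0$, while all other cost terms have bounded derivatives on the feasible set. Hence $x^{a*}_{od},x^{pt*}_{od}>0$, and the nonnegativity multipliers on the aggregate mode flows vanish. Stationarity with respect to $x^{a}_{od}$, taken along any used path (where path costs equalize at the minimum), gives $\mathrm{MSC}^a_{od}+\tfrac1\theta(\ln x^{a*}_{od}+1)=\nu_{od}$. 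Stationarity with respect to $x^{pt}_{od}$ gives $\mathrm{MSC}^{pt}_{od}+\tfrac1\theta(\ln x^{pt*}_{od}+1)=\nu_{od}$. Subtracting the two eliminates $\nu_{od}$ and the constants, leaving $\ln(x^{a*}_{od}/x^{pt*}_{od})=\theta(\mathrm{MSC}^{pt}_{od}-\mathrm{MSC}^a_{od})$. Exponentiating yields the claim.

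The main obstacle is making $\mathrm{MSC}^a_{od}$ well defined. The \gls{acr:amod} demand enters through several paths and also through the rebalancing balance \eqref{constr:conservation}. We would handle this by working with the aggregate $x^a_{od}$ and defining $\mathrm{MSC}^a_{od}$ as the minimal path marginal cost plus $\rho_o-\rho_d$. The KKT conditions for the path variables show that every path carrying positive flow attains this minimum, so the quantity is unambiguous. A second, smaller obstacle is positivity near the boundary: if $\alpha_{od}$ bounds are tight, or capacity forces $x^{pt}_{od}=0$, the capacity constraint could conflict with it. The entropy gradient argument together with feasibility of the interior excludes this for every $\theta>0$.
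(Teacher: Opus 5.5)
Your proposal is correct and follows the same route as the paper: write the KKT stationarity conditions for $x^{a}_{od}$ and $x^{pt}_{od}$, define $\mathrm{MSC}^{m}_{od}$ as the gradient of the non-entropic cost plus the multiplier terms, and subtract, so the common demand-split multiplier and the constant $1/\theta$ cancel. Yours is the more careful version: the paper folds every multiplier, including those for nonnegativity, into $\mathrm{MSC}$ and implicitly assumes interior flows, whereas you prove positivity and handle the multi-path and rebalancing structure explicitly. One small correction: the positivity argument only needs the other terms to have finite directional derivatives at the optimum, not bounded derivatives on the whole feasible set, which is unbounded in $r$.
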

\begin{proof}{Proof}
See Appendix \ref{app:lemma2}.
\end{proof}

The parameter $\theta$ governs the tradeoff between strict social-cost minimization and price implementability. 
As $\theta\to\infty$, the regularized problem approaches the unregularized formulation. 
Smaller values of~$\theta$ induce smoother mode shares and typically lead to more moderate recovered prices. 
Let $\eta_d^*$ denote the optimal value of the decomposed problem without regularization, and let $\eta_r^*$ denote the optimal value of the regularized problem. 
Then the gap~$\eta_r^*-\eta_d^*$ quantifies the loss in objective value induced by regularization relative to the most optimistic decomposed social optimum.

In the remainder of the paper, the target profile~$z=(z^a,z^{pt})$ is defined by the solution of the regularized target-oracle problem \eqref{eq:so_obj_reg}, with the corresponding \gls{acr:amod} prices recovered from \eqref{eq:price_recovery}. 
The next two subsections then compute the unilateral deviation values around this target profile.
\subsection{\gls{acr:amod} Best Response} \label{sec:amod_br}
This subsection instantiates the \gls{acr:amod} deviation oracle in \cref{fig:framework}. We first define the \gls{acr:amod} operator's utility under a fixed target transit action and clarify relevant constraints. The solution part then reformulates the resulting nonconvex profit-maximization problem and derives both certified bounds and a feasible sequential-convexification algorithm.

Fix a target transit action~$a^{pt}$. 
Consider an \gls{acr:amod} operator that maximizes profit. The \gls{acr:amod} best response~$\max_{a^a \in A^a}U^a(a^a,a^{pt})$ is a profit maximization problem under the fixed transit action.
The decision variables are the \gls{acr:od}-specific prices $\pi_{od}$, passenger route assignments $x_{od,p}^a$, total \gls{acr:amod} demand $x_{od}^a$, and rebalancing flows $r_{ij}$. 
The problem inherits the \gls{acr:amod} flow constraints \eqref{constr:flow}-\eqref{constr:conservation}, \gls{acr:amod} utility constraints \eqref{constr:a_time}-\eqref{constr:a_v}, the choice constraint \eqref{constr:choice}, and variable domain constraint \eqref{constr:nonneg} introduced in \cref{sec:so}, while all transit-side quantities entering passenger utility are treated as fixed data under the target action $a^{pt}$.
In particular, the transit utility $V_{od}^{pt}$ is exogenous in this subsection.

Concretely, the \gls{acr:amod} objective is to maximize operating profit
\begin{equation}\label{eq:amod_obj}
    \max_{\boldsymbol{x}^a, \boldsymbol{f}, \boldsymbol{r}, \boldsymbol{\pi}} ~\sum_{od}\pi_{od}x^a_{od}-\sum_{(i,j)\in \mathcal{E}^\mathrm{a}}C_{ij}f_{ij}.  
\end{equation}

\cite{kim2025strategic} discuss an \gls{acr:amod} profit maximization under similar setting. By assuming a given transit operation, they formulate a choice-aware profit maximization problem in which the \gls{acr:amod} designs routing, rebalancing, and pricing, with endogenous congestion. However, their final formulation retains unresolved nonconvexities, precluding a certifiable optimality gap. In this section, we reformulate the problem to isolate the nonconvexity into a single, parameter-independent structural component. We then approach this reformulated problem using \gls{acr:sca} \cite{razaviyayn2013unified}. While \gls{acr:sca} has been used to find feasible solutions for choice-based optimization in transportation~\cite{bertsimas2020joint,guo2024design}, these studies typically focus exclusively on primal feasibility without bounding the solution quality. By establishing a tight convex relaxation of our reformulated problem, we provide a rigorous upper bound to the profit, serving as a certificate of solution quality that guarantees the completeness of the \gls{acr:sca} algorithm.

\subsubsection{Exact Reformulation}
The optimization problem \eqref{eq:amod_obj} is inherently non-convex due to the expected travel-time definition \eqref{constr:a_time}, the logit mode-choice relation \eqref{constr:choice}, and the bilinear revenue term $\pi_{od}x_{od}^a$. We condense these non-convexities into a single parameter-independent term through reformulation.

\begin{lemma}[Exact Reformulation with Isolated Structural Nonconvexity] \label{prop:amod_reform}
Given a fixed transit utility~$V_{od}^{pt}$, the \gls{acr:amod} profit maximization problem \eqref{eq:amod_obj} is mathematically equivalent to the following minimization problem over the physical network flows:
\begin{equation}\label{obj:amod_re} 
\begin{aligned}
    \min_{\boldsymbol{x}^a, \boldsymbol{f}, \boldsymbol{r}} \quad & \sum_{(o,d)\in \od} \Big(x_{od}^a\ln x_{od}^a-x^a_{od}\ln (\alpha_{od}-x_{od}^a) \\
    & + x_{od}^aV_{od}^{pt}\Big) + \sum_{(i,j)\in \aedges}C_{ij}f_{ij}
    \\
    & + \mathrm{VOT}\sum_{(i,j)\in \mathcal{E}^a } (f_{ij}-r_{ij})T(b_{ij}+f_{ij}) 
\end{aligned}
\end{equation}
by eliminating non-convex choice constraints \eqref{constr:a_time} and \eqref{constr:choice}.
\end{lemma}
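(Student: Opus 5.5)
The plan is to eliminate the price variables by inverting the logit relation \eqref{constr:choice} for each \gls{acr:od} pair, substitute the result into the revenue term, and then rewrite the weighted travel-time term through edge flows. This produces an objective in the physical flows $(\boldsymbol{x}^a,\boldsymbol{f},\boldsymbol{r})$ alone, and the claim follows by showing that the two feasible sets correspond one-to-one with equal objective values up to sign.

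\emph{Step 1 (price elimination).} Fix $(o,d)\in\od$ with $0<x^a_{od}<\alpha_{od}$. By \eqref{constr:demand_split} and \eqref{constr:choice}, $x^a_{od}/(\alpha_{od}-x^a_{od})=\exp(V^a_{od}-V^{pt}_{od})$, so
\[
V^a_{od}=V^{pt}_{od}+\ln x^a_{od}-\ln(\alpha_{od}-x^a_{od}).
\]
Combining this with \eqref{constr:a_v} gives
\[
\pi_{od}=-\mathrm{VOT}\,t^a_{od}-V^{pt}_{od}-\ln x^a_{od}+\ln(\alpha_{od}-x^a_{od}).
\]
The price is therefore uniquely determined by the flows, given the fixed $V^{pt}_{od}$. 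Conversely, any such flow vector with the price defined this way satisfies \eqref{constr:choice}. The revenue becomes
\[
\pi_{od}x^a_{od}=-\mathrm{VOT}\,x^a_{od}t^a_{od}-x^a_{od}V^{pt}_{od}-x^a_{od}\ln x^a_{od}+x^a_{od}\ln(\alpha_{od}-x^a_{od}).
\]

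\emph{Step 2 (aggregating travel time).} Multiply \eqref{constr:a_time} by $x^a_{od}$, which cancels the ratio $x^a_{od,p}/x^a_{od}$. Summing over \gls{acr:od} pairs and exchanging the order of summation gives
\[
\sum_{od}x^a_{od}t^a_{od}=\sum_{(i,j)\in\aedges}T(b_{ij}+f_{ij})\sum_{od,p}x^a_{od,p}\delta^p_{ij}.
\]
By \eqref{constr:flow}, the inner sum equals $f_{ij}-r_{ij}$. Substituting Steps 1 and 2 into \eqref{eq:amod_obj} shows that the profit equals the negative of the objective in \eqref{obj:amod_re}. Hence maximizing profit is equivalent to minimizing \eqref{obj:amod_re} subject to the remaining linear constraints \eqref{constr:flow}, \eqref{constr:conservation}, \eqref{constr:a_total} and \eqref{constr:nonneg}. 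Here \eqref{constr:a_time} is used only as a definition and \eqref{constr:choice} only to define $\pi$, which is why both can be dropped.

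\emph{Step 3 (the main obstacle).} The delicate part is the boundary of the domain and the feasible-set correspondence. The logit model forces $0<x^a_{od}<\alpha_{od}$ for any finite price, while the reformulated problem is posed on the closed set $[0,\alpha_{od}]$. I would handle this as follows.
\begin{itemize}
\item At $x^a_{od}=0$, the convention $0\ln 0=0$ makes the \gls{acr:od} term vanish. This matches the limit $\pi_{od}\to\infty$, and \eqref{constr:a_time} is then vacuous.
\item As $x^a_{od}\to\alpha_{od}$, the term $-x^a_{od}\ln(\alpha_{od}-x^a_{od})\to+\infty$. Such points are therefore never optimal for the minimization and match the unattainable limit $\pi_{od}\to-\infty$.
\end{itemize}
The equivalence is thus stated on the interior, with the closure of the boundary at $x^a_{od}=0$ understood as a supremum or infimum that coincides in both problems. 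Finally, I would verify that the map from flows to $(\text{flows},\pi)$ is a bijection onto the original feasible set and preserves objective values up to sign, which gives equality of optimal values and of optimizers.
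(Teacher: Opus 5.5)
Your proposal is correct and follows the paper's proof essentially step for step: invert the binary logit to write $\pi_{od}$ in terms of $x^a_{od}$, $t^a_{od}$ and $V^{pt}_{od}$, substitute into the revenue, and then use \eqref{constr:a_time} with \eqref{constr:flow} to rewrite $\sum_{od}x^a_{od}t^a_{od}$ as $\sum_{(i,j)\in\aedges}(f_{ij}-r_{ij})T(b_{ij}+f_{ij})$. Your Step 3 adds boundary care that the paper omits, and it can be sharpened: $\frac{d}{dx}\big[x\ln x-x\ln(\alpha_{od}-x)\big]\to-\infty$ as $x\to 0^+$, while the other terms change at a finite rate along a feasible direction, so no minimizer has $x^a_{od}=0$; the paper, for its part, additionally checks $H''(x)=\alpha^2/\big(x(\alpha-x)^2\big)>0$ for $H(x)=x\ln x-x\ln(\alpha-x)$, which justifies that the only remaining nonconvexity is the edge term.
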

\begin{proof}{Proof}
By analytically inverting the model-choice relation, we can express the \gls{acr:amod} as a function of passenger flows. Substituting this algebraic expression into the profit objective removes the price variables entirely and isolates the structural non-convexity into a single, parameter-independent edge-flow term. The formal proof is provided in Appendix \ref{app:lemma3}. 
\end{proof}

Isolating the non-convexity into a single, parameter-independent component is a critical mathematical step, as it directly enables the tight convex relaxation and motivates the \gls{acr:sca} algorithm developed in the subsequent sections.

\subsubsection{Convex relaxation}
Define $v_{ij}:=f_{ij}-r_{ij}\ge 0$. Since $T(\cdot)$ is nondecreasing and $r_{ij}\ge 0$, we have
\begin{equation}\label{eq:amod_relax}
\begin{aligned}
    (f_{ij}-r_{ij})T(b_{ij}+f_{ij}) &= v_{ij}T(b_{ij}+v_{ij}+r_{ij}) \\
                                    & \geq v_{ij}T(b_{ij}+v_{ij}).   
\end{aligned}
\end{equation}
The right-hand side of \eqref{eq:amod_relax} is convex in $v_{ij}$ under the BPR travel-time function. Replacing each nonconvex term $\phi_{ij}(f_{ij},r_{ij})$ by its convex underestimator $v_{ij}T(b_{ij}+v_{ij})$ yields a convex relaxation of \eqref{obj:amod_re}.

% \begin{remark}[Global Upper Bound on \gls{acr:amod} Profit]\label{lemma:convex_under}
% Replacing $\phi_{ij}(f_{ij},r_{ij})$ with its convex underestimator \eqref{eq:amod_relax} generates a strict convex relaxation of the minimization problem \eqref{obj:amod_re}. Evaluating this relaxed problem yields a global lower bound on the minimized objective, which corresponds to a rigorous global upper bound on the true optimal \gls{acr:amod} profit. We denote this bounding certificate as $\overline{\eta}^{\,a}$. The relaxation is physically tight when the optimal steady-state rebalancing flows $r_{ij}$ approach zero.
% \end{remark}

\begin{remark}[Global Upper Bound on \gls{acr:amod} Profit]\label{lemma:convex_under}
Let $\Phi_{\mathrm{rel}}^{a*}$ denote the optimal objective value of the convex relaxation of \eqref{obj:amod_re}. Then
\[
\overline{\eta}^{\,a}:=-\Phi_{\mathrm{rel}}^{a*}
\]
is a global upper bound on the true optimal \gls{acr:amod} objective. The relaxation is tight when rebalancing flows are small, because \eqref{eq:amod_relax} becomes exact as $r_{ij}\to 0$.
 \end{remark}

% This relaxation is tight when the rebalancing flow is $r_{ij}$ is small. Solve the reformulated problem with the relaxed nonconvex term \eqref{eq:amod_relax} gives the lower bound of the minimization problem \eqref{obj:amod_re}, which is the upper bound to the profit. Denote the optimal profit, which is the negative to the objective of the minimization problem, as $\eta^*_{r}$

\subsubsection{\gls{acr:sca}}
To obtain a high-quality feasible primal solution, we address the exact bilinear term by isolating the concave residual $-r_{ij}T(b_{ij}+f_{ij})$ and applying \gls{acr:sca}. At iteration $k$, let
\begin{equation*}
    \begin{aligned}
        T_{ij}^{(k)} &:= T(b_{ij}+f_{ij}^{(k)}),\\
        \nabla T_{ij}^{(k)}
        &=
        t_{ij}^0 \alpha \frac{\beta}{c_{ij}}
        \left(\frac{b_{ij}+f_{ij}^{(k)}}{c_{ij}}\right)^{\beta-1}.
    \end{aligned}
\end{equation*}
The term $G_{ij}(r_{ij},f_{ij}) := -r_{ij}T(b_{ij}+f_{ij})$ is approximated by its first-order Taylor expansion around the state $(r_{ij}^{(k)},f_{ij}^{(k)})$:
\begin{equation}
\begin{aligned}
G_{ij}(r_{ij},f_{ij})
\approx\;
&-r_{ij}^{(k)}T_{ij}^{(k)}
-
T_{ij}^{(k)}(r_{ij}-r_{ij}^{(k)})
\\
&-
r_{ij}^{(k)}\nabla T_{ij}^{(k)}(f_{ij}-f_{ij}^{(k)}).
\end{aligned}
\label{eq:amod_taylor}
\end{equation}
Substituting \eqref{eq:amod_taylor} into the objective yields a convex subproblem, which we solve within trust-region bounds:
\[
f_{ij}^{(k)}-\delta_f \le f_{ij} \le f_{ij}^{(k)}+\delta_f,
\qquad \forall (i,j)\in\aedges,
\]
\[
r_{ij}^{(k)}-\delta_r \le r_{ij} \le r_{ij}^{(k)}+\delta_r,
\qquad \forall (i,j)\in\aedges.
\]

The full algorithmic pseudocode is detailed in Appendix \ref{app:sca}.

\begin{remark}[Optimality Gap Certificate]
The relaxation provides a global upper bound $\overline{\eta}^{\,a}$ on the \gls{acr:amod} deviation value, while the \gls{acr:sca} incumbent provides a feasible lower bound $\underline{\eta}^{\,a}$. Thus, the relative suboptimality gap of the computed \gls{acr:amod} best response is formally bounded by:
\begin{equation}\label{eq:amod_gap}
    \varepsilon \leq \frac{\overline{\eta}^{\,a}-\underline{\eta}^{\,a}}{\overline{\eta}^{\,a}}.
\end{equation}
This explicit gap eliminates the heuristic ambiguity typical of choice-based transportation optimizations, providing a certifiable guarantee of solution quality for the non-convex operator deviation.
\end{remark}
\subsection{\gls{acr:pt} Best Response} \label{sec:pt_br}
This subsection instantiates the transit deviation oracle in \cref{fig:framework}. We model the transit operator's utility under a fixed target \gls{acr:amod} action. We then solve $\max_{a^{pt}\in A^{pt}} U^{pt}(a^{pt}, a^a)$ by deriving a mixed-integer convex relaxation and establishing an exactness condition that determines when the relaxation recovers the original logit-equality model.

Transit service design encompasses a variety of modeling objectives, including maximizing the total demand served~\cite{bertsimas2021data}, minimizing total passenger wait time~\cite{bertsimas2020joint}, maximizing profit~\cite{luo2021optimal}, and maximizing system welfare~\cite{banerjee2025plan}. In the transit best response, we adopt the profit-maximization perspective augmented by a social incentive parameter. We define the objective of transit operator as
\begin{equation} \label{eq:obj_pt}
    \max_{\boldsymbol{x}^{pt}, \boldsymbol{s}, \boldsymbol{z}} \sum_{od}P^{pt}\cdot x_{od}^{pt}-\sum_{l \in L}K_ls_l+\sum_{od}\omega x_{od}^{pt},
\end{equation}
where the decision variables are the line frequencies $s_l$ and the induced transit demands $x_{od}^{pt}$. The first two terms calculate the profit, and $\omega$ represents the perceived social benefit per passenger. It can be seen as the social responsibility of the transit operator to accommodate a larger number of passengers on the transit network. As $\omega \to 0$, the operator reduces to a strict profit maximizer.

Fix a target \gls{acr:amod} action $a^a$, the transit best response solves \cref{eq:obj_pt} with the capacity constraint \eqref{constr:capacity}, the \gls{acr:pt} utility constraints \eqref{constr:pt_time}-\eqref{constr:pt_v}, the frequency selection constraints \eqref{constr:selection}-\eqref{constr:freq}, and the choice constraint \eqref{constr:choice}. The nonconvexity of the transit best-response problem arises from the waiting-time term in transit utility and the mode-choice relation. In this section, we propose a \gls{acr:micp} relaxation to this problem. Crucially, we demonstrate that under standard urban operational regimes this relaxation binds tightly, yielding an exact \gls{acr:micp} reformulation.

\subsubsection{Convex reformulation}
Transit travel time consists of constant in-vehicle and walking components, alongside frequency-dependent waiting time. We define an auxiliary variable $W_{od}$ to explicitly represent the expected waiting time based on the binary frequency selection:
\begin{equation}
W_{od} = \sum_{l\in\Gamma_{od}}\sum_{s\in S}\frac{1}{2s}z_{ls}, \qquad \forall (o,d)\in\od,
\label{eq:pt_waiting}
\end{equation}
where $\Gamma_{od}\subseteq \ptl$ is the set of lines used by the reference transit path for OD pair $(o,d)$, and $z_{ls}$ is the binary frequency-selection variable defined in \cref{sec:so_problem}. Let $C_{od}:=\mathrm{VOT}\cdot \tau_{od}^{pt}+P^{pt}$ aggregate the fixed non-waiting components of transit generalized cost. Using the binary logit relation and treating $V_{od}^a$ as fixed, for all $(o,d)\in\od$, transit demand satisfies
\begin{equation}
\ln\frac{x_{od}^{pt}}{\alpha_{od}-x_{od}^{pt}} = -\,\mathrm{VOT}\cdot W_{od} - C_{od} - V_{od}^a.
\label{eq:pt_br_choice_new}
\end{equation}
Multiplying both sides by $x_{od}^{pt}$ and rearranging yields
\begin{equation}\label{constr:pt_br_choice}
    \begin{aligned}
    x_{od}^{pt}\ln(x_{od}^{pt})-x_{od}^{pt}\ln(\alpha_{od}-x_{od}^{pt}) + \mathrm{VOT} \cdot (x_{od}^{pt}W_{od})\\ + x_{od}^{pt}(C_{od}+V_{od}^a)=0.         
    \end{aligned}
\end{equation}

The equality constraint \eqref{constr:pt_br_choice} contains a bilinear product $x_{od}^{pt}W_{od}$. We now linearize this product. Substituting \eqref{eq:pt_waiting} into the bilinear term gives:
\begin{equation*}
    x_{od}^{pt}W_{od}=\sum_{l \in \Gamma_{od}}\sum_{s \in S}\left(\frac{1}{2s}\right)x_{od}^{pt}\cdot z_{ls}
\end{equation*}
Recognizing that the product $x_{od}^{pt}\cdot z_{ls}$ is identical to the bilinear term encountered in \cref{sec:so}, we can directly apply the same exact Big-M linearization as in \cref{sec:so}. Introduce the auxiliary variables $u_{od,l,s}=x_{od}^{pt}\cdot z_{ls}\geq 0$.
% $u$ can be exactly defined by the following linear inequalities:
% \begin{equation} \label{constr:envelope_pt}
% \begin{aligned}
%     u_{od,l,s} & \leq x^{pt}_{od} \\
%     u_{od,l,s} & \leq \alpha_{od}\cdot z_{ls} \\
%     u_{od,l,s} & \geq x^{pt}_{od} - \alpha_{od}(1-z_{ls}) \\
%     u_{od,l,s} & \geq 0
% \end{aligned}
% \end{equation}
\eqref{constr:pt_br_choice} yields a strict equality constraint free of bilinear products:
\begin{equation}\label{constr:pt_br_choice_linearized}
    \begin{aligned}
    &x_{od}^{pt}\ln(x_{od}^{pt})-x_{od}^{pt}\ln(d_{od}-x_{od}^{pt}) \\
    +& \mathrm{VOT} \sum_{l \in \Gamma_{od}}\sum_{s \in S}\left(\frac{1}{2s}\right) u_{od,l,s}\\ 
    +& x_{od}^{pt}(C_{od}+V_{od}^a) = 0.         
    \end{aligned}
\end{equation}

Although \eqref{constr:pt_br_choice_linearized} is now linearized, the presence of the non-linear log-odds terms within a strict equality still violates convexity requirements. However, this structure is now amenable to a convex inequality relaxation.
\begin{lemma}[Convex Relaxation of Mode Choice] \label{lemma:pt_relax}
The strict equality constraint \eqref{constr:pt_br_choice_linearized} can be relaxed to the inequality:
\begin{equation}\label{constr:pt_br_choice_convex}
    \begin{aligned}
    &x_{od}^{pt}\ln(x_{od}^{pt})-x_{od}^{pt}\ln(d_{od}-x_{od}^{pt}) \\
    +& \mathrm{VOT} \sum_{l \in \Gamma_{od}}\sum_{s \in S}\left(\frac{1}{2s}\right) w_{od,l,s}\\ 
    +& x_{od}^{pt}(C_{od}+V_{od}^a)\leq 0,       
    \end{aligned}
\end{equation}
which defines a convex region.
\end{lemma}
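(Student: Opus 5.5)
The proof has two parts. First, show that replacing the equality by ``$\le 0$'' is a genuine relaxation, i.e. every point feasible for \eqref{constr:pt_br_choice_linearized} is feasible for \eqref{constr:pt_br_choice_convex}. Second, show that the set defined by \eqref{constr:pt_br_choice_convex}, together with the Big-M constraints on the auxiliary products, is convex in the continuous variables $(x^{pt}_{od},u_{od,l,s})$ once the binaries $z_{ls}$ are relaxed or fixed. I will read the auxiliary variable in \eqref{constr:pt_br_choice_convex} as the same product variable $u_{od,l,s}=x^{pt}_{od}z_{ls}$ introduced just above; the appearance of $w$ is notation inherited from \cref{sec:so}. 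I will also read $d_{od}$ as the demand volume $\alpha_{od}$.

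The relaxation part is immediate. Any $(x^{pt},u,z)$ that satisfies the equality trivially satisfies the inequality. Since the Big-M constraints represent $u_{od,l,s}=x^{pt}_{od}z_{ls}$ exactly for binary $z$ (Appendix \ref{app:bigm}), the feasible set of the original linearized model is contained in the relaxed one. Consequently, the maximum of \eqref{eq:obj_pt} over the relaxed set upper-bounds the true transit best response.

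For convexity, write the left-hand side of \eqref{constr:pt_br_choice_convex} as $h(x)+\ell(x,u)$ with
\[
h(x):=x\ln x - x\ln(\alpha_{od}-x),\qquad x\in[0,\alpha_{od}),
\]
and $\ell$ linear in $(x,u)$, because $\mathrm{VOT}$, $1/(2s)$, $C_{od}$, and the fixed $V^a_{od}$ are all data. It then suffices to show that $h$ is convex on $[0,\alpha_{od})$, since a sublevel set $\{h+\ell\le 0\}$ of a convex function is convex. Intersecting with the linear Big-M constraints and the demand bounds $0\le x^{pt}_{od}\le\alpha_{od}$ preserves convexity, and the binaries make the full model an \gls{acr:micp}.

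The main computation is the convexity of $h$. The term $x\ln x$ has second derivative $1/x>0$. For $-x\ln(\alpha-x)$, the first derivative is $-\ln(\alpha-x)+x/(\alpha-x)$ and the second derivative is $1/(\alpha-x)+\alpha/(\alpha-x)^2>0$ on $[0,\alpha)$. Hence $h''(x)=1/x+1/(\alpha-x)+\alpha/(\alpha-x)^2>0$ on $(0,\alpha)$. An alternative argument is that $-x\ln(\alpha-x)$ is the perspective-type composition of the convex function $-\ln$ with an affine map, multiplied by a nonnegative $x$; I will just use the direct second-derivative check. The only delicate points are the boundaries. At $x=0$, use the convention $0\ln 0=0$ and continuity, so $h$ is convex on the closed interval $[0,\alpha)$. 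As $x\to\alpha$, $h\to+\infty$, so the sublevel set automatically excludes $x=\alpha$ and remains convex (one may impose $x\le\alpha-\epsilon$ or treat $h$ as an extended-value convex function). I expect this boundary and domain handling, rather than the derivative computation, to be the only step needing care.
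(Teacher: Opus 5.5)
Your proof is correct and follows the paper's argument: convexity of $H(x)=x\ln x-x\ln(\alpha_{od}-x)$ via its second derivative (your $1/x+1/(\alpha-x)+\alpha/(\alpha-x)^2$ simplifies to the paper's $\alpha^2/(x(\alpha-x)^2)$), linearity of the remaining terms, and convexity of sublevel sets; your explicit check that the inequality relaxes the equality and your boundary handling are harmless additions. Your parenthetical ``perspective'' aside is not valid as you state it, since a convex function multiplied by a nonnegative $x$ need not be convex, but it does hold when applied to all of $h$, which is exactly the perspective $x\ln\bigl(x/(\alpha-x)\bigr)$ of $-\ln$ composed with an affine map.
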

\begin{proof}{Proof}
See Appendix \ref{app:lemma4}.
\end{proof}

Therefore, by replacing the non-convex mode choice and travel time constraints with the big-M envelope and convex relaxation \eqref{constr:pt_br_choice_convex}, we establish a globally solvable \gls{acr:micp} relaxation. The exactness of the relaxation is discussed in the following proposition.

\begin{lemma}[Exactness of the \gls{acr:micp} Relaxation] \label{prop:pt_exactness}
Let $(\mathbf{x}^{pt*}, \mathbf{s}^*)$ be a global optimal solution obtained by solving the relaxed \gls{acr:micp}. For the frequency vector $\mathbf{s}^*$, let $\mathbf{x}^{L}(\mathbf{s}^*)$ denote the unique logit flow defined by the strict equality \eqref{constr:pt_br_choice_linearized}, and let $\Omega_{cap}(\mathbf{s}^*)$ denote the feasible polytope defined by capacity constraint \eqref{constr:capacity}. The relaxed solution $(\mathbf{x}^{pt*}, \mathbf{s}^*)$ is strictly exact and globally optimal for the original non-convex \gls{acr:pt} best response if and only if $\mathbf{x}^{L}(\mathbf{s}^*) \in \Omega_{cap}(\mathbf{s}^*)$.
\end{lemma}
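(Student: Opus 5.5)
The plan is to compare, for the fixed frequency vector $\mathbf{s}^*$, the feasible set of the relaxed \gls{acr:micp} with that of the original logit-equality model. First I will describe the relaxed feasible set per OD pair. With $\mathbf{z}$ fixed, the big-M envelope makes $u_{od,l,s}=x^{pt}_{od}z_{ls}$ exact (Appendix \ref{app:bigm}). Constraint \eqref{constr:pt_br_choice_convex} can then be divided by $x^{pt}_{od}>0$, which gives $\ln\frac{x^{pt}_{od}}{\alpha_{od}-x^{pt}_{od}}\le -\mathrm{VOT}\,W_{od}-C_{od}-V^a_{od}$. The left side is strictly increasing in $x^{pt}_{od}$ on $(0,\alpha_{od})$. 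So for fixed $\mathbf{s}$ the relaxed constraint is exactly the box $0\le x^{pt}_{od}\le x^{L}_{od}(\mathbf{s})$, where $x^L_{od}(\mathbf{s})$ is the unique root of \eqref{constr:pt_br_choice_linearized}. This is the ``demand upper envelope''.

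Next I will use monotonicity of the objective. For $\mathbf{s}$ fixed, \eqref{eq:obj_pt} equals $\sum_{od}(P^{pt}+\omega)x^{pt}_{od}$ minus a constant, so it is strictly increasing in every $x^{pt}_{od}$ (assuming $P^{pt}+\omega>0$). Hence the relaxed problem at $\mathbf{s}^*$ maximizes an increasing linear function over $\{0\le \mathbf{x}\le \mathbf{x}^L(\mathbf{s}^*)\}\cap\Omega_{cap}(\mathbf{s}^*)$.

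For sufficiency, assume $\mathbf{x}^L(\mathbf{s}^*)\in\Omega_{cap}(\mathbf{s}^*)$. Then $\mathbf{x}^L(\mathbf{s}^*)$ is the componentwise maximum of the relaxed feasible set at $\mathbf{s}^*$, so it is the unique relaxed optimizer there, and $\mathbf{x}^{pt*}=\mathbf{x}^L(\mathbf{s}^*)$. This point satisfies the logit equality, so it is feasible for the original problem. Because the relaxation's feasible set contains the original one, the relaxed optimum upper-bounds the original optimum over all $\mathbf{s}$. A feasible point attaining that bound is globally optimal for the original problem.

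For necessity, assume $\mathbf{x}^L(\mathbf{s}^*)\notin\Omega_{cap}(\mathbf{s}^*)$. For fixed $\mathbf{s}^*$, the logit equality has exactly one solution, $\mathbf{x}^L(\mathbf{s}^*)$, so no feasible point of the original problem with frequencies $\mathbf{s}^*$ exists. The pair $(\mathbf{x}^{pt*},\mathbf{s}^*)$ therefore violates either the equality or capacity, and it is not exact. I expect the main obstacle to be stating carefully what ``exact'' means here: it refers to the relaxed solution itself being feasible, and thus optimal, for the original problem, not only to equality of optimal values. I also expect care to be needed in handling edge cases: $x^{pt}_{od}=0$, where the division step fails and the $0\ln 0$ convention must be invoked, and ties in the objective when $P^{pt}+\omega=0$.
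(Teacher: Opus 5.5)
Your proposal is correct and follows the paper's proof. For sufficiency, both arguments note that with $\mathbf{s}^*$ fixed the relaxed problem maximizes the strictly increasing objective $(P^{pt}+\omega)\sum_{od}x^{pt}_{od}$ over $\{\mathbf{0}\le\mathbf{x}\le\mathbf{x}^L(\mathbf{s}^*)\}\cap\Omega_{cap}(\mathbf{s}^*)$, which forces $\mathbf{x}^{pt*}=\mathbf{x}^L(\mathbf{s}^*)$, and then conclude with the relaxation upper bound; your necessity step is the paper's feasibility argument, stated as a contrapositive.

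Your derivation of the box $0\le x^{pt}_{od}\le x^L_{od}(\mathbf{s})$ from \eqref{constr:pt_br_choice_convex} spells out a step the paper only asserts. The $x^{pt}_{od}=0$ edge case you flag is real, because the multiplied-through equality \eqref{constr:pt_br_choice_linearized} is also satisfied at $x^{pt}_{od}=0$. The uniqueness used in the necessity direction should therefore be read with respect to the original logit relation \eqref{constr:choice}, which is what the paper implicitly assumes.
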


\begin{proof}{Proof}
The exactness of the relaxation holds because the relaxed objective strictly pushes passenger flow to the capacity boundary, naturally recovering the exact logit equilibrium unless a physical demand-shedding state is reached. The formal proof is provided in Appendix \ref{app:lemma5}.
\end{proof}

\begin{remark}[Operational Equivalence of the Relaxation] \label{remark:pt_relax_int}
The relaxation of the mode choice constraint from an equality to an inequality ($\le 0$) is strictly tight unless the physical capacity constraint of the transit network is binding. In such cases, the slack in the inequality represents the shadow price of congestion, and the relaxed solution naturally replicates a``denied boarding'' policy in which passengers up to the physical capacity are served. Therefore, the relaxation does not introduce artificial behavioral errors, but rather models the regime shift from an unconstrained choice equilibrium to a capacity-constrained system.
\end{remark}

Crucially, the operational interpretation of demand shedding in \cref{remark:pt_relax_int} highlights a fundamental strategic misalignment between the \gls{acr:pt} and the \gls{acr:mcp}. In the social optimum model in \cref{sec:so}, the \gls{acr:mcp} mandates that all travel demand must be accommodated, strictly precluding denied boarding through the transit capacity constraint. Conversely, from the decentralized operator's perspective, shedding excess demand is an economically rational strategy whenever the marginal cost of dispatching additional capacity exceeds the marginal utility of serving those passengers. This inherent conflict regarding demand clearance further underscores the necessity of the proposed framework: targeted regulatory subsidies are mathematically required to bridge this objective gap, compensating the operator for the financial burden of fully accommodating the passenger flow.

\subsection{Minimal Payment for Social Optimum}
The formulations developed in \cref{sec:so}, \cref{sec:amod_br}, and \cref{sec:pt_br} operationalize the three oracles illustrated in \cref{fig:framework}. We close this section by concretizing how these independent optimization modules are synthesized to calculate the minimal implementation payment to achieve social optimum on the multimodal network.

First, solving the regularized joint optimization problem (\cref{sec:so}) yields the socially optimal target strategy profile $z = (z^a, z^{pt})$. This profile consists of the target \gls{acr:amod} operational strategy $z^a = (\boldsymbol{\pi}, \boldsymbol{x}^{a}, \boldsymbol{r})$ and the target \gls{acr:pt} frequency design $z^{pt} = \boldsymbol{s}$. Under this socially desired target profile, we evaluate the baseline operational profits for both operators, denoted as $U^a(z^a, z^{pt})$ and $U^{pt}(z^a, z^{pt})$, using the objective functions defined in \eqref{eq:amod_obj} and \eqref{eq:obj_pt}, respectively.

Next, we quantify each operator's incentive to unilaterally deviate from this social optimum. By fixing the transit frequency as $z^{pt}$, we solve the \gls{acr:amod} best-response oracle (\cref{sec:amod_br}) to obtain the optimal deviating strategy $a^{a*} = (\boldsymbol{\pi}^{*}, \boldsymbol{x}^{a*}, \boldsymbol{r}^{*})$ and the corresponding maximum deviation profit $U^a(a^{a*}, z^{pt})$. Symmetrically, by fixing the \gls{acr:amod} operation as $z^a$, we solve the \gls{acr:pt} best-response oracle (\cref{sec:pt_br}) to obtain the optimal deviating frequency $\boldsymbol{s}^{*}$ and its maximum profit $U^{pt}(a^{pt*}, z^a)$.

According to the $k$-implementation logic represented in \cref{fig:framework}, the minimal total payment $k(z)$ required by the \gls{acr:mcp} to sustain the social optimum is exactly the sum of these maximum unilateral deviation gains:
$$k(z) = \left[ U^a(a^{a*}, z^{pt}) - U^a(z^a, z^{pt}) \right] + \left[ U^{pt}(a^{pt*}, z^a) - U^{pt}(z^{pt}, z^a) \right].$$
This final calculation provides the minimum financial commitment required to ensure that the operators' private profit motives align with the system-wide objectives.
\section{Numerical Experiments}\label{sec:Results}
This section has two purposes. 
First, it validates the three oracle models introduced in \cref{sec:Method}: the target strategy oracle, the \gls{acr:amod} deviation oracle, and the \gls{acr:pt} deviation oracle.
Second, it benchmarks the system-wide efficiency of our proposed framework against uncoordinated operational baselines to quantify the value of regulatory intervention. It then uses the validated oracle outputs to quantify the implementation burden associated with the social target and to interpret the economic sources of the resulting payment. 
% The experiments are designed as a realistic case study rather than as a statistical forecast of all Manhattan travel behavior. 
Our objective is comparative and diagnostic, to study solution quality, incentive misalignment, and implementation payments on a large-scale urban network with empirically grounded demand and infrastructure data.

We proceed in four steps. We first analyze the role of entropy regularization in the target oracle. 
We then assess the numerical quality of the two deviation oracles by studying the certified gap of the \gls{acr:amod} solver and the exactness/tightness of the \gls{acr:pt} relaxation. Next, we evaluate the value of coordination by comparing the socially optimal target against two uncoordinated baselines: a fully decentralized state driven by operator best responses, and a static legacy state governed by realistic GTFS schedules.
Finally, having established the uncoordinated efficiency losses, we combine the three oracle outputs to study the implementation payment implied by the proposed framework to close the gap. 

The experiments are based on the NYC Manhattan network with demand derived from TLC trip data~\cite{nyc_tlc_trip_data}, the road network built from \gls{acr:osm}~\cite{OpenStreetMap}, and the transit network from MTA GTFS data~\cite{mta_subway_gtfs}. To analyze the impact of congestion, exogenous background traffic is represented through the ratio $b_{ij}/c_{ij}$, where $c_{ij}$ is the road capacity. Throughout the experiments, the term background traffic level (BL) refers to this ratio. More detailed descriptions of the data processing, spatial aggregation, parameter values, and computation hardware are provided in Appendix \ref{app:case}.

%To demonstrate the practicality of the proposed framework, we conducted experiments using real-world demand and network data from Manhattan, New York City. 
%This section first details the data sources, network construction, and parameter settings. We then evaluate the computational performance and theoretical properties of the proposed solutions for the three core oracles presented in \cref{sec:Method}. 
%Specifically, 1) for the social optimum oracle (\cref{sec:so}), we illustrate the role of entropy regularization in the joint social-optimum model; 2) for \gls{acr:amod} best response oracle (\cref{sec:amod_br}), we analyze the convergence of the \gls{acr:sca} algorithm for the \gls{acr:amod} best response; 3) for \gls{acr:pt} best response oracle (\cref{sec:pt_br}), we verify the tightness of the \gls{acr:micp} relaxation for the \gls{acr:pt} frequency design. Finally, we synthesize these components to compute the minimum implementation subsidy required by the \gls{acr:mcp}. All experiments were conducted on a HPC cluster with CPU-only nodes (96 cores, ~520 GB RAM per node) with Gurobi 10.0.2. Each solve used 4 CPU threads and 32 GB RAM.

\subsection{Target oracle: regularization and implementability} \label{sec:res_so}
We first study the role of the entropy regularizer in the target oracle. The purpose of this experiment is to understand the tradeoff between social-cost optimality and the practical implementability of the recovered \gls{acr:amod} prices. 
\Cref{fig:so_theta} reports, for three representative congestion levels ($\mathrm{BL}=0.1, 0.5, 1.0$), the optimality loss of the regularized target problem relative to the unregularized decomposed target problem, together with the recovered \gls{acr:amod} unit-distance price. 
The unit-distance price is computed as the recovered price $\pi_{od}$ divided by the length of the assigned route.

%As discussed in \cref{sec:so}, the entropy regularizer is crucial for recovering mathematically stable and practically implementable \gls{acr:amod} prices $\pi_{od}$. 
%\cref{fig:so_theta} illustrates the impact of the regularization parameter $\theta$ on both the optimality gap (relative to the unregularized problem) and the resulting \gls{acr:amod} unit distance prices, evaluated across three different background traffic levels (BL = 0.1, 0.5, and 1.0). The \gls{acr:amod} unit distance price is calculated by dividing the recovered price $\pi_{od}$ by the length of the assigned route.

Two patterns are immediate. 
First, as $\theta \rightarrow \infty$, the influence of the regularization term approaches zero, causing the regularized problem to converge toward the unregularized social optimum. 
Consequently, we observe that the optimality gap decreases monotonically as $\theta$ increases across all traffic levels.
Second, this gain in target optimality is accompanied by increasingly extreme recovered prices, especially on the negative side. In the present framework, severely negative prices are mathematically feasible, but they are economically problematic because they depress the \gls{acr:amod} operator's target profit and therefore inflate the external transfer needed to keep the operator at the social target. Put differently, very large $\theta$ values move the target oracle closer to the decomposed social optimum while simultaneously making the resulting target harder to implement through realistic regulatory payments.

For the remainder of the experiments, we set $\theta=0.5$. This value preserves most of the decomposed target objective while avoiding the extreme negative prices that appear for larger $\theta$. We stress that $\theta$ is not a universal constant; it is a policy calibration parameter that mediates the tradeoff between strict social-cost minimization and price implementability. The purpose of \cref{fig:so_theta} is therefore not to identify a mathematically ``correct'' $\theta$, but to show that the target oracle admits a transparent and interpretable efficiency--implementability tradeoff.

\begin{figure}[tb]
    \centering
    \includegraphics[width=0.5\linewidth]{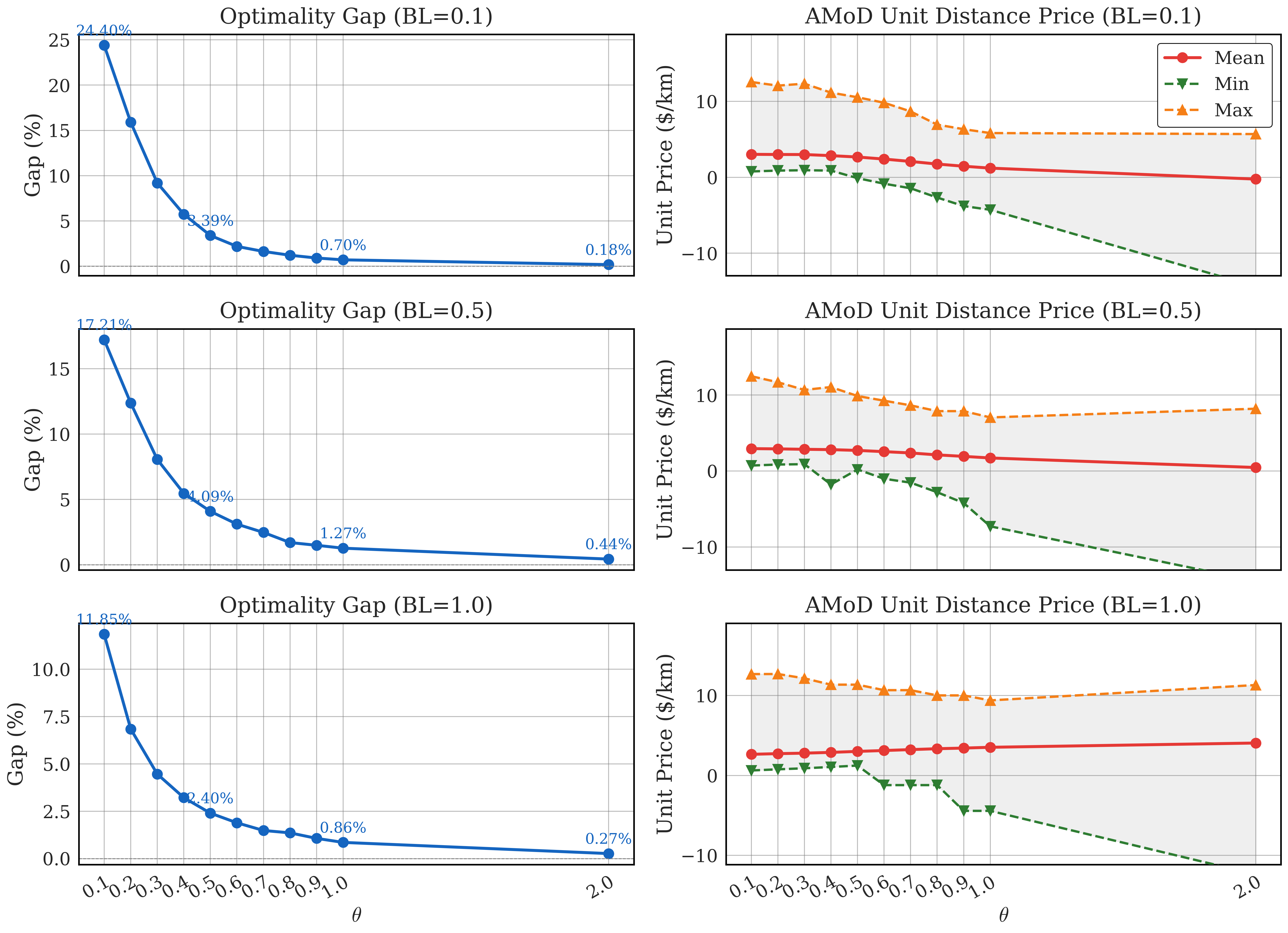}
    \caption{Tradeoff induced by entropy regularization: optimality loss relative to the unregularized decomposed target problem and recovered \gls{acr:amod} unit-distance price under different background traffic levels.}
    \label{fig:so_theta}
\end{figure}

\subsection{\gls{acr:amod} oracle: solution quality and convergence} \label{sec:res_amod}
We next assess the numerical quality of the \gls{acr:amod} deviation oracle. 
For the implementation analysis to be credible, the unilateral deviation value for the \gls{acr:amod} operator must be computed with controlled numerical error. 
As established in \cref{sec:amod_br}, the \gls{acr:amod} best-response problem admits a rigorous bounding architecture: a convex relaxation gives a global upper bound on the true profit, while the \gls{acr:sca} procedure returns feasible incumbents and therefore valid lower bounds. This yields a certified optimality gap for the original nonconvex deviation problem.

To improve numerical stability, \gls{acr:sca} is warm-started from the network flow assignment computed by the target oracle. 
Because this point is already feasible and physically meaningful, it provides a strong initialization for the local convexification steps. 
\Cref{fig:sca_convergence} plots the certified gap between the feasible incumbent and the global upper bound across successive \gls{acr:sca} iterations. 
Each curve corresponds to a different \gls{acr:amod} operating-cost scenario, ranging from $\$0.60$/km to $\$1.20$/km, under a fixed background traffic level.

The convergence pattern is stable across all tested regimes. 
The gap decreases steadily with iteration count, and the final certified gap remains below $5\%$ in every tested instance. 
\Cref{tab:sca_summary} reports the final gap and total wall-clock time as functions of background traffic. 
As congestion increases, the problem becomes more difficult because the interaction between routing, rebalancing, pricing, and endogenous congestion becomes sharper, and both runtime and final certified gap increase moderately. 
Even in the most difficult tested regime, however, the final gap remains small enough to support the downstream implementation analysis.

Two clarifications are important for interpretation. 
First, the y-axis in \cref{fig:sca_convergence} is a rigorous certificate derived from the convex relaxation; it is not just a local convergence residual. 
Second, the gap at the warm-started target point should not be interpreted as the implementation payment or even as the exact unilateral deviation gain. 
It only measures how far the target-point incumbent lies from the final best-response value relative to the global upper bound. 
The actual deviation term used in the implementation analysis is computed from the final oracle output, not from the initialization.

% \cref{fig:sca_convergence} illustrates the convergence process of the \gls{acr:sca} primal objective toward the convex relaxation's upper bound across successive iterations. The y-axis tracks this relative difference, serving as a strict upper limit on the true suboptimality gap. Within each background traffic level (BL) subplot, the multiple trajectories represent sensitivity runs across varying \gls{acr:amod} operational costs, ranging from $\$0.60$/km to $\$1.20$/km. As shown, the algorithm reliably converges across all operational regimes, yielding a final certified gap of less than $5\%$. The final convergence gaps and total computational times are summarized in \cref{tab:sca_summary}.

% Because the \gls{acr:sca} algorithm is initialized exactly at the socially optimal target strategy, the initial gap in \cref{fig:sca_convergence} carries a direct economic interpretation. Specifically, we observe that higher background traffic levels result in a larger initial gap. In the context of our framework, this initial gap quantifies the \gls{acr:amod} operator's immediate financial incentive to unilaterally deviate from the \gls{acr:mcp} target. Consequently, heavy congestion breeds a stronger deviation motive, directly forecasting a greater need for regulatory subsidies to reliably enforce the social optimum. We explore this dynamic in detail in later discussions on the final implementation payment results.

\begin{figure}[tb]
    \centering
    \includegraphics[width=0.5\linewidth]{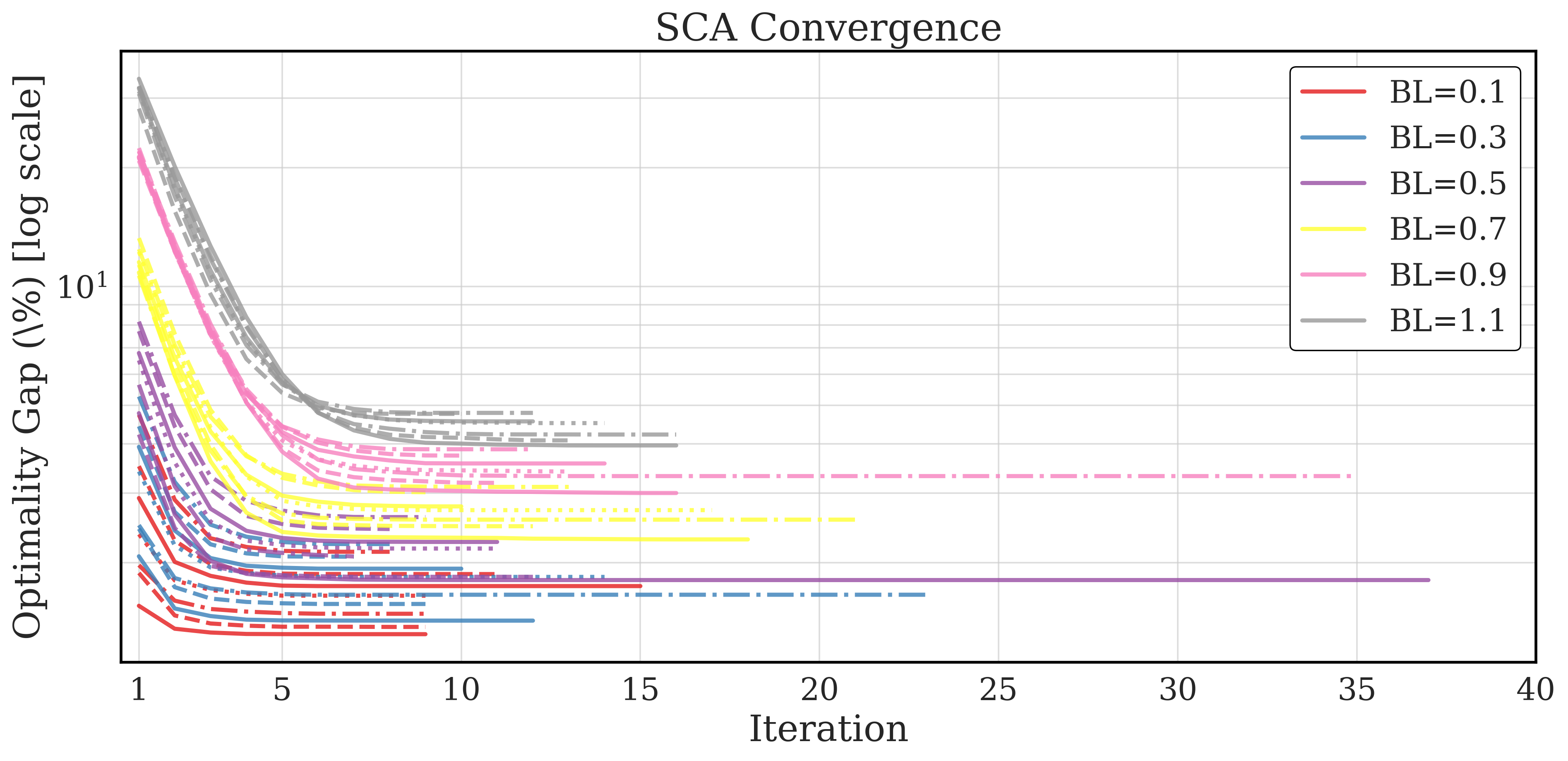}
    \caption{
    Certified optimality gap of the \gls{acr:amod} best-response oracle across \gls{acr:sca} iterations under different background traffic levels. Each trajectory corresponds to one \gls{acr:amod} operating-cost scenario.}
    \label{fig:sca_convergence}
\end{figure}

% \begin{table}[tb]
% \centering
% \small
% \caption{Final certified optimality gap and total computation time for the \gls{acr:amod} best-response \gls{acr:sca} algorithm under different background traffic levels.}
% \label{tab:sca_summary}
% \begin{tabular}{c S[table-format=1.1(1)] S[table-format=2.1(1)]}
% \toprule
% \textbf{BL} & {\textbf{Final Gap (\%)}} & {\textbf{Time (min)}} \\ 
% \midrule
% 0.1 & 1.7 \pm 0.3  & 5.0  \pm 2.7 \\
% 0.3 & 1.8 \pm 0.3  & 7.0  \pm 3.3 \\
% 0.5 & 2.2 \pm 0.3  & 10.7 \pm 2.9 \\
% 0.7 & 2.7 \pm 0.3  & 14.7 \pm 3.7 \\
% 0.9 & 3.4 \pm 0.3  & 15.7 \pm 5.2 \\
% 1.1 & 4.4 \pm 0.3  & 14.0 \pm 4.6 \\ 
% \bottomrule
% \addlinespace
% \multicolumn{3}{l}{\footnotesize \textit{Note:} \gls{acr:sca} is run with $\epsilon_r=\epsilon_f=10^{-4}$ and $\epsilon=10^{-3}$.}
% \end{tabular}
% \end{table}
\begin{table}[tb]
\centering
\small
\caption{Final certified optimality gap and total computation time for the \gls{acr:amod} best-response \gls{acr:sca} algorithm under different background traffic levels.}
\label{tab:sca_summary}
\begin{tabular}{l cccccc}
\toprule
& \multicolumn{6}{c}{\textbf{Background Traffic Level (BL)}} \\
\cmidrule(lr){2-7}
\textbf{Metric} & \textbf{0.1} & \textbf{0.3} & \textbf{0.5} & \textbf{0.7} & \textbf{0.9} & \textbf{1.1} \\
\midrule
\textbf{Final Gap (\%)} & $1.7 \pm 0.3$ & $1.8 \pm 0.3$ & $2.2 \pm 0.3$ & $2.7 \pm 0.3$ & $3.4 \pm 0.3$ & $4.4 \pm 0.3$ \\
\textbf{Time (min)}     & $5.0 \pm 2.7$ & $7.0 \pm 3.3$ & $10.7 \pm 2.9$ & $14.7 \pm 3.7$ & $15.7 \pm 5.2$ & $14.0 \pm 4.6$ \\
\bottomrule
\addlinespace
\multicolumn{7}{l}{\footnotesize \textit{Note:} \gls{acr:sca} is run with $\epsilon_r=\epsilon_f=10^{-4}$ and $\epsilon=10^{-3}$.}
\end{tabular}
\end{table}

\subsection{\gls{acr:pt} oracle: exactness boundary and capacity-limited regimes} \label{sec:res_pt}
We now study the \gls{acr:pt} deviation oracle. 
The key question is not only whether the \gls{acr:micp} relaxation is exact, but also how informative it remains when exactness fails. 
To this end, we test the relaxation across a grid of transit capacities $C_l$ and operating costs $K_l$. 
The tested parameter bounds are derived from standard transit specifications published by federal agencies ~\cite{fta_mta_2004_feis_ch5b,fta_ntst_2024,tcqsm_bus_capacity_2013}, ensuring they reflect plausible, real-world operational constraints.
These experiments are designed to map the exactness boundary and to distinguish benign numerical inexactness from a genuine change in the underlying operating regime.
\Cref{tab:pt_exact} reports, for each scenario, whether the relaxation is exact and, when it is not, the relative gap between the relaxed upper bound and a feasible capacity-repaired lower bound. 
The recovery procedure starts from the relaxed solution and incrementally increases bottleneck frequencies until the resulting induced ridership satisfies all capacity constraints. 
Hence, when exactness fails, the table still reports a valid interval between a relaxed upper bound and a feasible lower bound for the original equality-constrained problem.

The low-capacity rows in \cref{tab:pt_exact}, namely $C_l\in\{60,80,120,140\}$, should be read as stress tests used to expose the exactness boundary; the baseline subway-scale calibration corresponds to $C_l=900$. 
The results follow a clear pattern. 
Exactness holds when capacity is abundant relative to induced demand. As background traffic increases, more travelers are pushed toward transit, capacity becomes tighter, and the relaxation is more likely to be inexact. 
Importantly, at the baseline operating cost $K_l=320$ and baseline capacity $C_l=900$, the relaxation is exact at $\mathrm{BL}=0.3$ and remains tightly bounded at the more congested tested regimes $\mathrm{BL}=0.7$ and $\mathrm{BL}=0.9$, with gaps of only $2.8\%$ and $3.2\%$, respectively. 
Thus, in the parameter region most relevant for the main Manhattan case study, the \gls{acr:pt} deviation term is either exact or very tightly bounded.

The large gaps that appear in the low-capacity rows should not be read as numerical instability. 
Rather, they signal a different operating regime: a capacity-truncated or demand-shedding state. 
In such cases, the strict logit-equality model implies more transit demand than the physical system can accommodate, and the relaxed solution should be interpreted as the profit-maximizing service level under limited capacity rather than as a full-demand operating point. This interpretation is consistent with \cref{remark:pt_relax_int}: once the capacity constraint binds, the gap reflects a meaningful divergence between unconstrained choice demand and physically serviceable demand. 
From the municipality's viewpoint, this is precisely the kind of misalignment that can generate a nontrivial implementation burden.

\begin{table}[tb]
\centering
\small
\caption{Exactness and relaxation-gap sensitivity analysis for the \gls{acr:pt} best-response oracle across transit capacities and operating costs.}
\label{tab:pt_exact}
\begin{tabular}{cc c S[table-format=2.1] c S[table-format=3.1] c S[table-format=3.1]}
\toprule
& & \multicolumn{2}{c}{\textbf{BL = 0.3}} & \multicolumn{2}{c}{\textbf{BL = 0.7}} & \multicolumn{2}{c}{\textbf{BL = 0.9}} \\
\cmidrule(lr){3-4} \cmidrule(lr){5-6} \cmidrule(lr){7-8}
\textbf{$K_l$ (\$/VRH)} & \textbf{$C_l$ (pax/hr)} & \textbf{Exact} & \textbf{Gap (\%)} & \textbf{Exact} & \textbf{Gap (\%)} & \textbf{Exact} & \textbf{Gap (\%)} \\ 
\midrule
\multirow{2}{*}{260} & 60   & N & 75.8 & N & 107.2 & N & 134.7 \\
                     & 80   & N & 51.7 & N & 91.6  & N & 139.4 \\ \addlinespace
\multirow{2}{*}{300} & 120  & N & 2.4  & N & 17.6  & N & 23.2  \\
                     & 140  & N & 2.4  & N & 3.3   & N & 6.4   \\ \addlinespace
\multirow{2}{*}{320} & 900  & E & 0.0  & N & 2.8   & N & 3.2   \\
                     & 1400 & E & 0.0  & N & 2.8   & N & 3.2   \\ 
\bottomrule
\addlinespace
\multicolumn{8}{l}{\footnotesize \textit{Note:} All instances solved with MIP gap = 0.01. E: Exact; N: Non-exact.}
\end{tabular}
\end{table}

\subsection{Value of coordination: Baseline Comparisons}
Before analyzing the regulatory subsidies required to achieve social optimum, we first quantify the cost of uncoordinated operations to motivate the necessity of \gls{acr:mcp} intervention. To do so, we benchmark the social optimal target strategy against two uncoordinated baseline scenarios: (1) \textbf{Joint Best Response (BR) deviation:} We evaluate the system state where both the \gls{acr:amod} and \gls{acr:pt} operators unilaterally deviate from the social optimum to play their respective best responses. This scenario characterizes the ``Price of Anarchy'', representing the system-level efficiency loss that occurs when operators act selfishly in the absence of regulatory incentives, and (2) \textbf{Static baseline:} We extract the historical metro frequencies from the GTFS dataset, fix them as a static \gls{acr:pt} strategy, and solve for the \gls{acr:amod} best response against this schedule. This scenario captures a realistic status quo, illustrating how an uncoordinated \gls{acr:amod} operates against \gls{acr:pt} operations.
\cref{fig:gap} presents the relative increase in social cost for both baselines compared to the social optimum. The joint BR deviation exhibits an excess social cost of up to 40\%, highlighting the negative externalities generated by decentralized best responses. Furthermore, the static baseline yields a consistent excess gap over 60\%, underscoring the efficiency loss when static transit schedules are confronted with dynamic self-interested \gls{acr:amod} competition. The magnitude of these gaps justifies the need for proactive regulatory intervention by the \gls{acr:mcp}.
\begin{figure}[tb]
    \centering
    \includegraphics[width=0.5\linewidth]{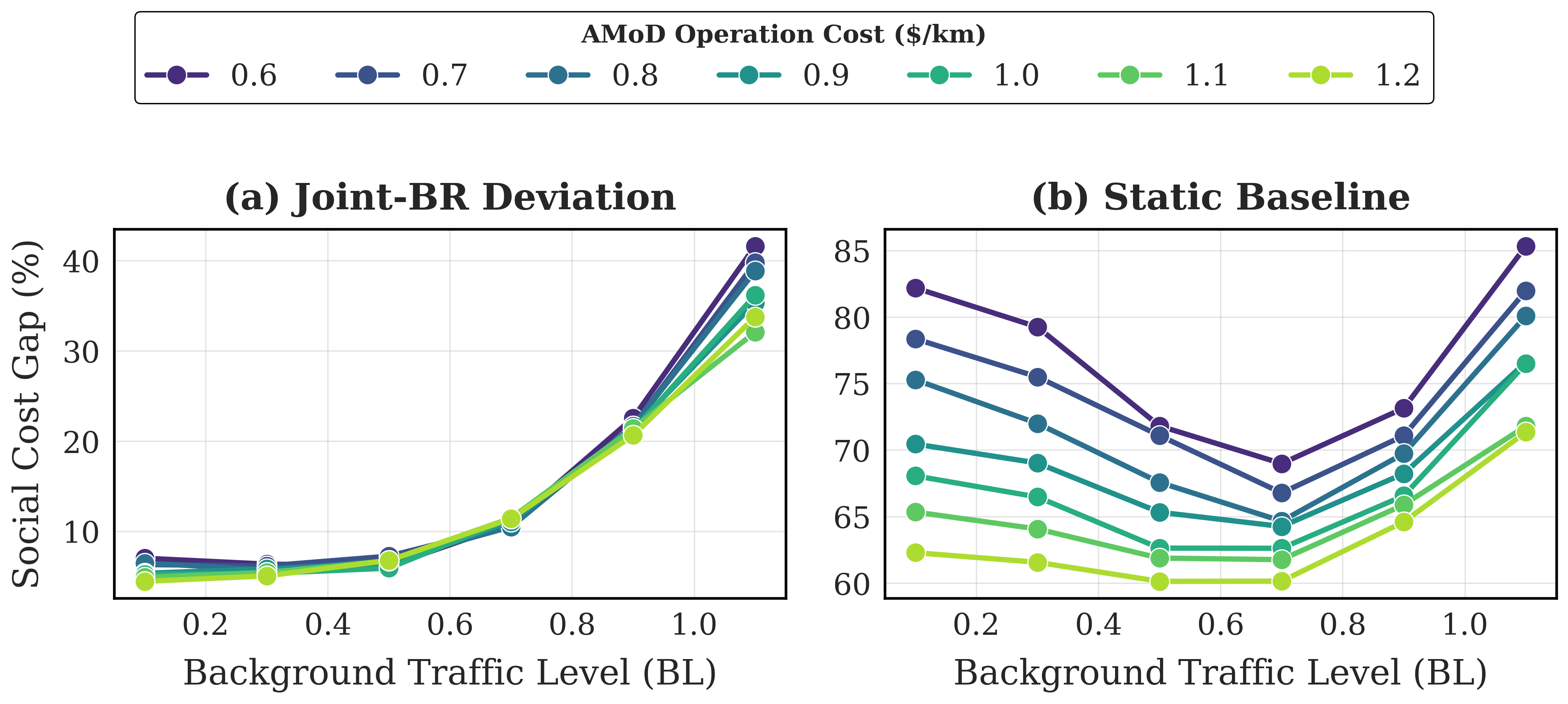}
    \caption{Excess social cost due to deviation and uncoordinated operator behavior.}
    \label{fig:gap}
\end{figure}
\subsection{Implementation payment for the social optimum}
We now combine the three oracle outputs to calculate the implementation cost required to close the efficiency gap of uncoordinated operations. By synthesizing the outputs from the three oracles shown in \cref{sec:res_so}, \cref{sec:res_amod}, and \cref{sec:res_pt}, we can rigorously quantify the minimal total subsidy $k(z)$ required to enforce the socially optimal target strategy. As illustrated in \cref{fig:k_res}, the relationship between the background traffic level (BL) and the required implementation payments exhibits a non-monotonic (U-shaped) trend. The total required subsidy decreases initially as congestion rises, but then increases under heavy traffic conditions. This geometry indicates that the incentive gap between the social optimum and the operators' decentralized profit-maximizing objectives is most severe at the extremes of the congestion spectrum.

\begin{figure}[tb]
    \centering
    \begin{minipage}[t]{0.48\textwidth}
        \centering
        \includegraphics[width=\linewidth]{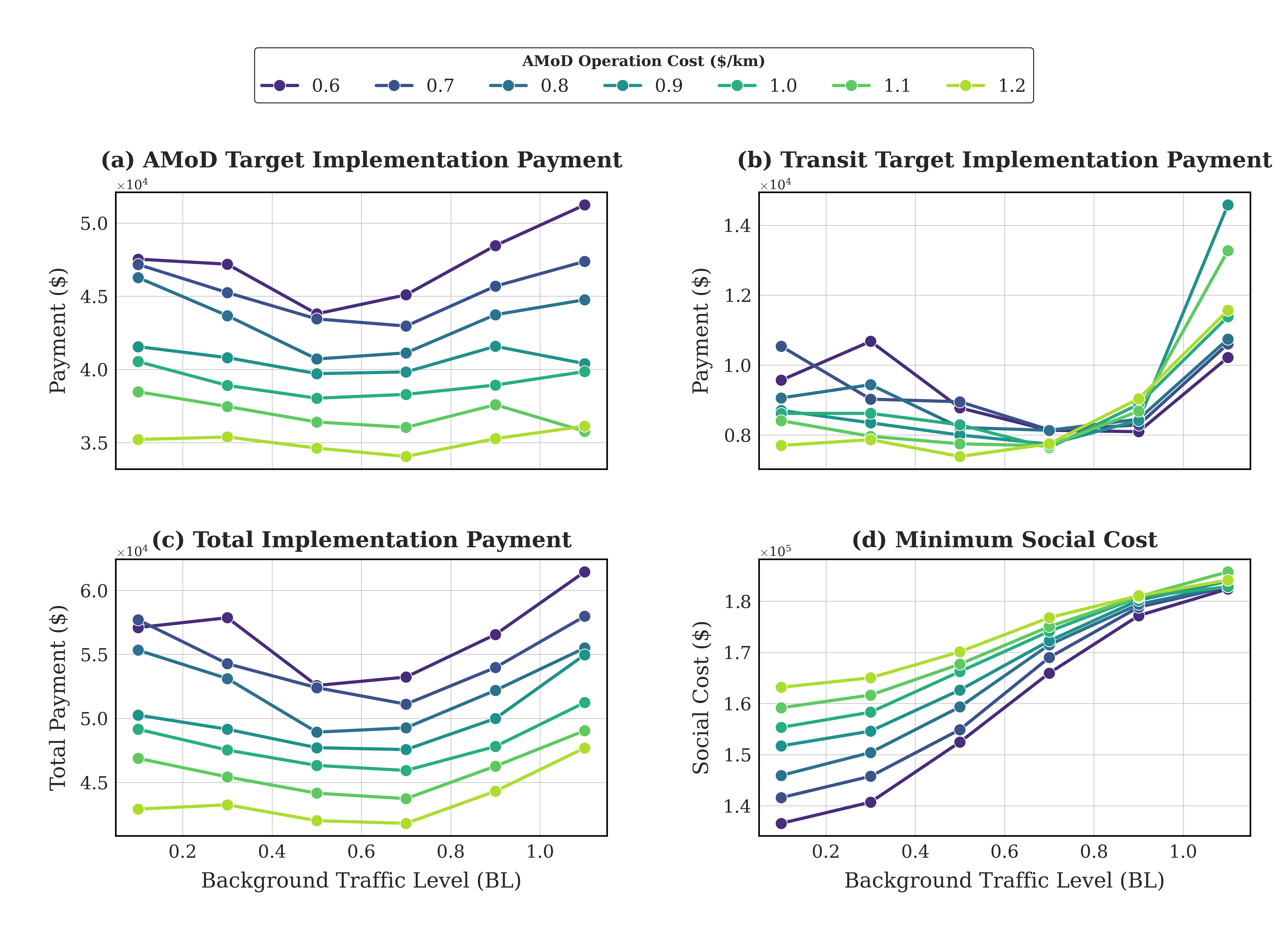}
        \caption{Implementation-payment profile for each operator and social cost of the target profile as background traffic varies.}
        \label{fig:k_res}
    \end{minipage}\hfill
    \begin{minipage}[t]{0.48\textwidth}
        \centering
        \includegraphics[width=\linewidth]{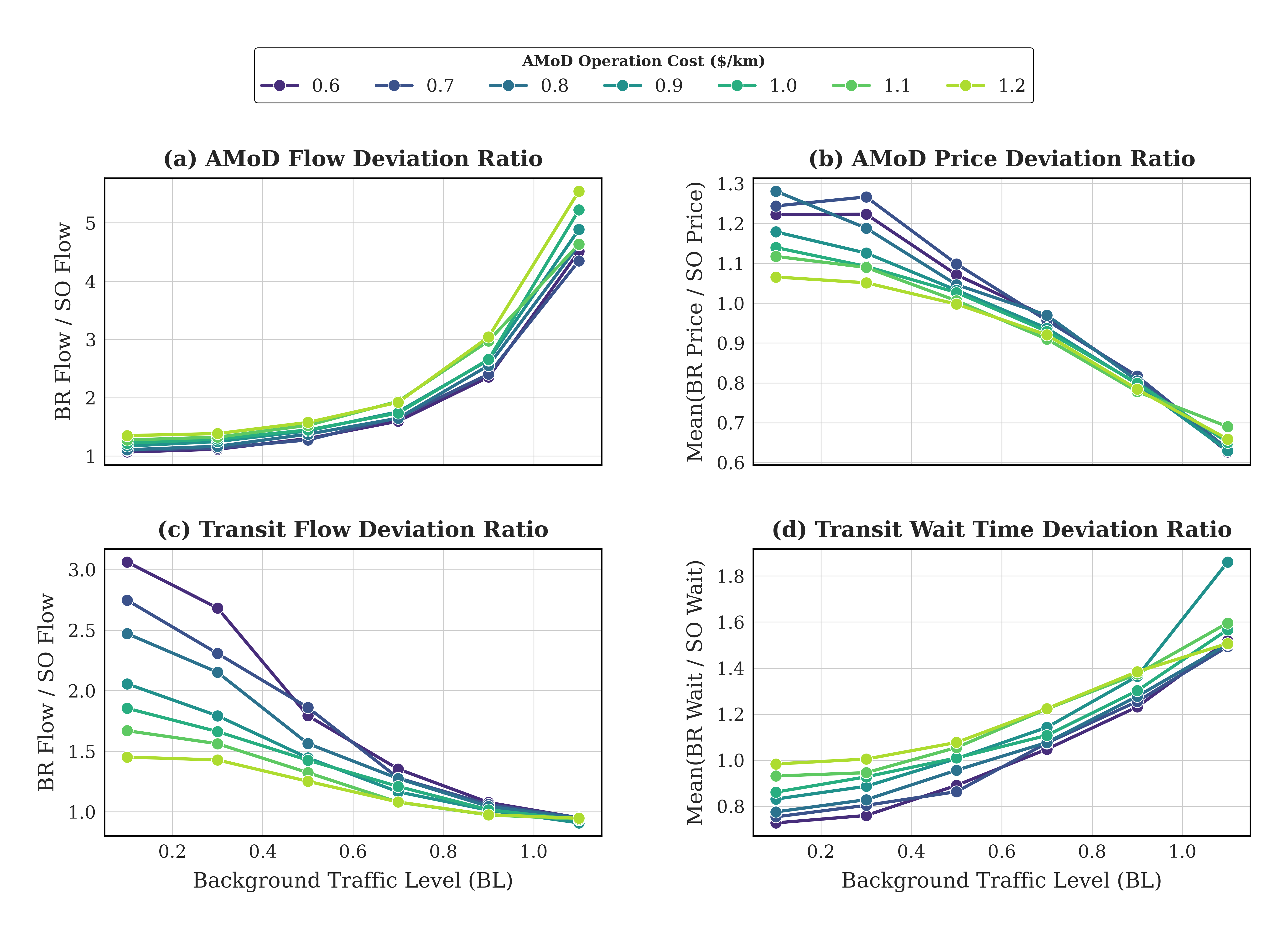}
        \caption{Deviation mechanisms underlying the implementation payment: ratios of \gls{acr:amod} flow, \gls{acr:amod} price, and \gls{acr:pt} waiting time between operator best response (BR) and social optimum (SO).}
        \label{fig:k_analysis}
    \end{minipage}
\end{figure}

\Cref{fig:k_analysis} explains the mechanisms behind this U-shape. 
Panels (a) and (b) compare the \gls{acr:amod} best response with the target profile. 
At low BL, roads are uncongested and \gls{acr:amod} offers a strong time advantage over transit.
The municipality therefore assigns substantial demand to \gls{acr:amod} in the social target, but the self-interested \gls{acr:amod} operator exploits this favorable market position by charging higher prices than is socially desirable. 
In this regime, the implementation payment to \gls{acr:amod} mainly offsets market-power incentives. 
At high BL, the mechanism reverses. 
Because additional \gls{acr:amod} vehicles now create substantial congestion externalities, the social target restricts \gls{acr:amod} usage. 
The operator, however, internalizes private revenue but not the congestion imposed on the rest of the road network and therefore seeks to attract more riders than is socially optimal. 
In this regime, the payment to \gls{acr:amod} compensates the operator for internalizing an externality it would otherwise ignore.

Panels (c) and (d) reveal the corresponding \gls{acr:pt} mechanism. 
At low BL, the social target can rely more heavily on \gls{acr:amod}, so it economizes on transit operating cost by selecting a lower transit frequency. 
The self-interested \gls{acr:pt} operator instead tends to over-supply frequency in order to retain ridership, resulting in shorter waiting times than the system-cost-minimizing target. 
At high BL, transit becomes the socially preferred high-capacity mode, and the target profile calls for aggressive frequency provision to absorb demand shifted away from the congested road network.
The \gls{acr:pt} operator, by contrast, behaves more conservatively because the marginal operating cost of additional service eventually exceeds the privately captured ridership benefit. Hence, under heavy congestion, the implementation payment to \gls{acr:pt} compensates the operator for providing more service than it would choose based on its own objective.

Taken together, the numerical studies support two conclusions. 
The first is computational: in the benchmark Manhattan regimes, the three oracles are either exact or tightly bounded, so the resulting implementation analysis is numerically credible. 
The second is economic: the implementation payment is interpretable and decomposable. It identifies not only how large the municipality's intervention must be, but also whether the underlying friction is \gls{acr:amod} market power, \gls{acr:amod} externality ignorance, \gls{acr:pt} frequency over-supply, or \gls{acr:pt} frequency under-supply. 
This is precisely the diagnostic role that motivates the proposed implementation-based framework.

\section{Conclusion}\label{sec:Conclusion}
This paper studied how a \gls{acr:mcp} can align the behavior of self-interested \gls{acr:amod} and \gls{acr:pt} operators with system-wide objectives in a multimodal transportation network. 
Existing work has shown the benefits of coordinated multimodal design, but it has largely stopped short of the central regulatory question: how can a municipality actually induce a socially desirable operating point when the relevant operators are decentralized and strategically independent? 
We addressed this question through a $k$-implementation perspective.
Rather than searching over municipal policies and then predicting an equilibrium of the induced operator game, we separate the problem into two parts: computing a socially preferred target profile and computing the minimum realized payment needed to make unilateral deviation unattractive at that target. 
This shift removes the outer equilibrium-search layer that appears in equilibrium-based regulation models and replaces it with one target problem and two unilateral-deviation problems. 
At the behavioral level, it also replaces the strong requirements of Nash-equilibrium reasoning with the weaker assumption that operators do not play dominated strategies.

To operationalize this idea, we developed a modular optimization architecture consisting of three oracles. 
The target oracle computes a socially preferred \gls{acr:amod}--\gls{acr:pt} operating point with explicit passenger mode choice, endogenous road congestion, operator-specific service design, and transit capacity constraints. 
The two deviation oracles then quantify the incentives of the \gls{acr:amod} and \gls{acr:pt} operators to move away from that target. For the \gls{acr:amod} operator, we derived an exact reformulation, a convex relaxation that yields a global bound, and a sequential convexification scheme that returns high-quality feasible solutions with a certified gap. 
For the \gls{acr:pt} operator, we developed a mixed-integer convex relaxation, characterized when it is exact, and showed its power as a regime shift identification tool under inexact relaxation. 
Taken together, these results show that the implementation payment can be computed exactly when the deviation problems are solved exactly and otherwise can be tightly bracketed with certified bounds.

% The Manhattan case study illustrates both the practicality and the interpretability of the framework. 
% In the benchmark regimes studied here, the deviation oracles are either exact or tightly bounded, which makes the resulting implementation analysis numerically credible. 
% More importantly, the implementation payment is economically informative. 
% It is not monotone in congestion; instead, it is highest at the low- and high-congestion extremes, revealing that the social target is hardest to implement precisely when the private incentives of the two operators are most distorted. 
% The deviation analysis further shows that the source of this distortion changes with operating conditions. 
% Under light congestion, the dominant frictions are \gls{acr:amod} price inflation and \gls{acr:pt} frequency over-supply relative to the social target. 
% Under heavy congestion, the dominant frictions shift to \gls{acr:amod} over-attraction of demand despite road externalities and \gls{acr:pt} under-provision of service relative to the socially desired operating point. 
% In this sense, the proposed framework does more than compute subsidies: it identifies why those subsidies are needed.

The Manhattan case study validates both the computational tractability and the economic value of this framework at urban scale. As demonstrated across varying background traffic regimes, the framework acts as an economic diagnostic tool rather than simply returning a monolithic subsidy number. Because the implementation payment is decomposable, it successfully isolates exactly why operators deviate under different conditions—identifying not just the magnitude of the required subsidies, but also why those subsidies are needed. The main message of the paper is therefore both conceptual and computational. 
Incentive design for multimodal transportation systems need not be treated as an intractable outer layer added on top of an already difficult system-design problem.
By exploiting the structure of $k$-implementation, the \gls{acr:mcp} can combine a target model with unilateral-deviation models and obtain a constructive, interpretable measure of the incentive gap between decentralized operation and the social optimum.
% This positions the framework not only as a regulatory design tool, but also as a diagnostic tool for understanding misalignment in multimodal transportation systems.

Several directions remain open. 
First, the present paper focuses on singleton targets. 
A natural extension is set-valued implementation, in which the municipality is willing to accept any operating point in a socially desirable set and seeks the least costly one to implement. 
Second, the framework suggests a direct welfare--payment trade-off analysis: rather than fixing the social optimum a priori, the municipality could optimize over targets by explicitly balancing social performance against the payment required to implement them. 
Third, our formulation assumes that operator objectives are known and that transfers can be conditioned on the relevant realized actions.
Extending the framework to robust or partially observed implementation, for example, when the \gls{acr:mcp} has only noisy estimates of operator utilities or can contract only on aggregate, observable outcomes, would move the approach closer to practical deployment. Finally, incorporating stochastic demand, rolling-horizon operations, and richer passenger choice structures would broaden the empirical scope of the framework while preserving its core implementation logic.

\bibliographystyle{ieeetr}
\bibliography{references}

\appendix
% \section{Proof of \cref{prop:dcomp}}\label{app:lemma1}
% \begin{proof}{Proof}
% The price variables enter the model only through the \gls{acr:amod} utility \eqref{constr:a_v} and the mode-choice constraint \eqref{constr:choice}; they do not appear in the objective \eqref{eq:so_obj}. 
% Therefore, every feasible solution of the original problem projects to a feasible solution of the reduced problem with the same objective value. 
% Conversely, if~$(x^{a*},x^{pt*},f^*,r^*,s^*,\ldots)$ solves the reduced problem, then for each OD pair the binary logit equation can be inverted to recover a price~$\pi_{od}^*$ satisfying \eqref{constr:choice}.
% The recovered solution satisfies \eqref{constr:choice} and therefore is feasible for the original problem with unchanged objective value. Hence the decomposition preserves optimality. \qed
% \end{proof}

\section{Exactness of Big-M Linearization}\label{app:bigm}
Suppose $z_{ls}\in\{0,1\}$ and~$0\le x_{od}^{pt}\le \alpha_{od}$. 
By introducing the continuous auxiliary variable $w_{od,l,s} = x_{od}^{pt}\cdot z_{ls} \geq 0$, the bilinear term $x^{pt}_{od}\cdot z_{ls}$ is exactly linearized via the following linear envelope constraints:
\begin{equation} \label{constr:envelope}
\begin{aligned}
    w_{od,l,s} & \leq x^{pt}_{od} \\
    w_{od,l,s} & \leq \alpha_{od}\cdot z_{ls} \\
    w_{od,l,s} & \geq x^{pt}_{od} - \alpha_{od}(1-z_{ls}) \\
    w_{od,l,s} & \geq 0
\end{aligned}
\end{equation}

Because $z_{ls} \in \{0,1\}$ and $x_{od}^{pt}$ is bounded between 0 and~$\alpha_{od}$, the four inequalities in \eqref{constr:envelope} enforce~$w_{od,l,s}=x_{od}^{pt}$ when~$z_{ls}=1$ and~$w_{od,l,s}=0$ when $z_{ls}=0$. 
Hence \eqref{constr:envelope} is exactly equivalent to \eqref{eq:so_aux}.

\section{Proof of \cref{lemma:logit}}\label{app:lemma2}
\begin{proof}{Proof}
For fixed $\boldsymbol{z}$, the subproblem is convex. Let $J$ denote the non-entropic part of the objective, and let $\{g_k\}_{k\in K}$ denote the remaining constraints with multipliers $\{\mu_k\}_{k\in K}$. The Lagrangian is
\begin{align}         
\mathcal{L} = J + \frac{1}{\theta}
\sum_{(o,d)\in\od}\sum_{m\in\{a,pt\}} x_{od}^m\ln x_{od}^m
+
\sum_{k\in K}\mu_k g_k, \nonumber
\end{align}
where $m\in\{a,pt\}$. 
The KKT stationarity condition with respect to $x_{od}^m$ gives
\[
\frac{\partial J}{\partial x_{od}^m}
+\sum_k \mu_k\frac{\partial g_k}{\partial x_{od}^m}
+\frac{1}{\theta}\left(\ln x_{od}^{m*}+1\right)=0.
\]
Define
\[
\mathrm{MSC}_{od}^m
=
\frac{\partial J}{\partial x_{od}^m}
+\sum_k \mu_k\frac{\partial g_k}{\partial x_{od}^m}.
\]
Applying stationarity for both modes and subtracting the two equations yields
\[
\ln\frac{x_{od}^{a*}}{x_{od}^{pt*}}
=
\theta\left(\mathrm{MSC}_{od}^{pt}-\mathrm{MSC}_{od}^{a}\right),
\]
which proves the claim.
\end{proof}

\section{Proof of \cref{prop:amod_reform}}\label{app:lemma3}
\begin{proof}{Proof}
By taking the natural logarithm of the binary logit choice relation \eqref{constr:choice}, the unique price $\pi_{od}$ supporting a given flow can be analytically written as:
\begin{equation}\label{eq:choice_price}
    \begin{aligned}
        \pi_{od} &= \\
        &-\big(\ln x_{od}^a-\ln (\alpha_{od} - x_{od}^a)+\mathrm{VOT} \cdot t_{od}^a+V_{od}^{pt}\big). 
    \end{aligned} 
\end{equation}
Substituting \eqref{eq:choice_price} directly into the objective function \eqref{eq:amod_obj} removes the choice constraint \eqref{constr:choice} and transforms the objective into the equivalent minimization
\begin{equation}\label{obj:amod_eq} 
\begin{aligned}
    \min_{\boldsymbol{x}^a, \boldsymbol{f}, \boldsymbol{r}} ~& \sum_{od} \big(x_{od}^a\ln x_{od}^a-x^a_{od}\ln (\alpha_{od}-x_{od}^a)+\\ &\mathrm{VOT} \cdot x_{od}^at_{od}^a+ x_{od}^aV_{od}^{pt}\big)
    + \sum_{(i,j)\in \mathcal{E}^\mathrm{a}}C_{ij}f_{ij}    
\end{aligned}
\end{equation}
Building upon \eqref{obj:amod_eq}, we can further eliminate the explicit expected-travel-time constraint \eqref{constr:a_time} by projecting the path-based bilinear term~$x_{od}^at_{od}^a$ in \cref{obj:amod_eq} onto the edges
\begin{equation}\label{eq:amod_bi}
    \begin{aligned}
        \sum_{od} x_{od}^at_{od}^a &= \sum_{od,p} x_{od,p}^a\sum_{(i,j)\in \mathcal{E}^a }T(b_{ij}+f_{ij})\delta^p_{ij} \\
        &= \sum_{(i,j)\in \mathcal{E}^a } (f_{ij}-r_{ij})T(b_{ij}+f_{ij}).
    \end{aligned}
\end{equation}
Substituting \eqref{eq:amod_bi} into \eqref{obj:amod_eq} removes the final non-convex constraint \eqref{constr:a_time}. The log term in the objective \eqref{obj:amod_eq} is convex, since for function
\[
H(x):=x\ln x - x\ln(\alpha-x),
\qquad x\in(0,\alpha),
\]
we have
\[
H''(x)=\frac{\alpha^2}{x(\alpha-x)^2}>0.
\]
Therefore, the problem is now subject exclusively to linear physical network constraints, with all structural non-convexity strictly isolated within the edge-based term $\phi_{ij}(f_{ij},r_{ij}) := (f_{ij}-r_{ij})T(b_{ij}+f_{ij})$.
\end{proof}

\section{\gls{acr:sca}}
\label{app:sca}
We report \cref{algo:sca}.

\begin{algorithm}
\label{algo:sca}
\small
\caption{\gls{acr:sca} for the \gls{acr:amod} best response}
\label{algo:sca}
\begin{algorithmic}[1]
\Require Initial point $(f^{(0)},r^{(0)})$, trust-region radii $(\delta_f,\delta_r)$, tolerances $(\epsilon_f,\epsilon_r, \epsilon)$
\State Set $k\gets 0$
\State $\eta^{(0)} \gets J(f^{(0)}, r^{(0)})$ \Comment{Evaluate initial objective}
\Repeat
    \State Compute $T_{ij}^{(k)}$ and $\nabla T_{ij}^{(k)}$ for all $(i,j)\in\aedges$
    \State Form the convex subproblem by replacing $-r_{ij}T(b_{ij}+f_{ij})$ with its first-order approximation \eqref{eq:amod_taylor}
    \State Solve the convex subproblem subject to
    \[
    f_{ij}^{(k)}-\delta_f \le f_{ij} \le f_{ij}^{(k)}+\delta_f,
    \qquad
    r_{ij}^{(k)}-\delta_r \le r_{ij} \le r_{ij}^{(k)}+\delta_r,
    \]
    for all $(i,j)\in\aedges$
    \State Let the solution be $(f^{(k+1)},r^{(k+1)})$
    \State $\eta^{(k+1)} \gets J(f^{(k+1)}, r^{(k+1)})$ \Comment{Track improvement}
    \State $k\gets k+1$
\Until{($|f_{ij}^{(k)}-f_{ij}^{(k-1)}|\le\epsilon_f$ and $|r_{ij}^{(k)}-r_{ij}^{(k-1)}|\le\epsilon_r$ $\forall (i,j)$) \textbf{or} $|\eta^{(k)}-\eta^{(k-1)}| \le \epsilon$}
\end{algorithmic}
\end{algorithm}

\section{Proof of \cref{lemma:pt_relax}}\label{app:lemma4}
\begin{proof}{Proof}
As shown in Appendix \ref{app:lemma3}, the non-linear component $H(x):=x\ln x - x\ln(d_{od}-x)$ is convex for all valid flows $x \in (0, d_{od})$. The rest of the terms are all linear, so the left-hand side of \eqref{constr:pt_br_choice_convex} is a convex function. \eqref{constr:pt_br_choice_convex} defines the level set of a convex function, which is also convex.
\end{proof}

\section{Proof of \cref{prop:pt_exactness}} \label{app:lemma5}
\begin{proof}{Proof}
Let $\mathcal{P}_{rel}$ denote the feasible region of the relaxed \gls{acr:micp}, and let $\mathcal{P}_{orig} \subset \mathcal{P}_{rel}$ denote the feasible region of the original problem with the strict equality constraint \eqref{constr:pt_br_choice_linearized}. Rewrite the objective as $F(\mathbf{x}^{pt}, \mathbf{s}) = \mathbf{c}^\top \mathbf{x}^{pt} - \mathbf{K}^\top \mathbf{s}$, where the marginal revenue vector $\mathbf{c} > \mathbf{0}$ since $P^{pt} + \omega > 0$. By definition, $(\mathbf{x}^{pt*}, \mathbf{s}^*)$ globally maximizes $F$ over $\mathcal{P}_{rel}$.

\textit{Necessity ($\Rightarrow$):} Assume $(\mathbf{x}^{pt*}, \mathbf{s}^*)$ is exactly optimal for the original problem. This requires the solution to be feasible in $\mathcal{P}_{orig}$, meaning the passenger flow must exactly satisfy the logit equilibrium: $\mathbf{x}^{pt*} = \mathbf{x}^{L}(\mathbf{s}^*)$. Because it is a physically feasible solution, it must not violate capacity, meaning $\mathbf{x}^{pt*} \in \Omega_{cap}(\mathbf{s}^*)$. 

\textit{Sufficiency ($\Leftarrow$):} Assume $\mathbf{x}^{L}(\mathbf{s}^*) \in \Omega_{cap}(\mathbf{s}^*)$. We first prove that this solution is feasible for the original problem. In the relaxed \gls{acr:micp}, for the given optimal frequency $\mathbf{s}^*$, the optimizer maximizes $\mathbf{c}^\top \mathbf{x}^{pt}$ subject to the relaxation $\mathbf{x}^{pt} \le \mathbf{x}^{L}(\mathbf{s}^*)$ and the capacity bound $\mathbf{x}^{pt} \in \Omega_{cap}(\mathbf{s}^*)$. Because the objective gradient $\nabla_{\mathbf{x}} F = \mathbf{c} > \mathbf{0}$, the function is strictly monotonically increasing. Since $\mathbf{x}^{L}(\mathbf{s}^*)$ is within the capacity polytope, the optimizer naturally drives the flow to this maximum, selecting exactly $\mathbf{x}^{pt*} = \mathbf{x}^{L}(\mathbf{s}^*)$. Therefore, $(\mathbf{x}^{pt*}, \mathbf{s}^*)$ is a feasible solution for the original problem with the equality constraint, which provides a lower bound to the original problem. Since $(\mathbf{x}^{pt*}, \mathbf{s}^*)$ also provides an upper bound as it is the optimal solution of the relaxed problem, the upper and lower bound are equal at $(\mathbf{x}^{pt*}, \mathbf{s}^*)$. Therefore, $(\mathbf{x}^{pt*}, \mathbf{s}^*)$ is an optimal solution to the original problem, and the relaxation is exact.
\end{proof}

\section{Case study setup} \label{app:case}
Experiments require travel demand and a structured multimodal transportation network as inputs.
\subsection{Demand}
Travel demand is derived from the New York City Taxi and Limousine Commission (TLC) trip record dataset \cite{nyc_tlc_trip_data}. 
We extract trips originating and ending within Manhattan during the morning peak period (9:00-11:00 AM) on May 15, 2024. 
The TLC dataset aggregates locations into 64 distinct taxi zones. 
We exclude intra-zone trips, resulting in a core demand profile of approximately 22,000 trips distributed across roughly 3,000 unique \gls{acr:od} pairs.
To spatially align this zone-level demand with the node-based walking network, we project the centroid of each TLC zone to its nearest walking network node. This mapping yields a consistent, routable representation in which each demand pair $(o,d) \in \tilde{D}$ strictly corresponds to nodes within the walking graph.

\subsection{Networks}
The multimodal networks consist of a driving network, a transit network, and a walking network. 
The driving and walking networks are constructed from \gls{acr:osm}~\cite{OpenStreetMap}. 
For the driving network, we retain road type, segment length, and number of lanes. 
Road capacity~$c_{ij}$ in the BPR function is set to $700$ veh/h per lane. 
% The driving network includes the type of road, length, and number of lanes. 
% The number of lanes is used to calculate the capacity $c_{ij}$ in the BPR function, with $700 veh/h$ for each lane. Exogenous background traffic $b_{ij}$ is scaled relative to this capacity (e.g., a background traffic level of 10\% indicates that $b_{ij}/c_{ij} = 0.1$).

Because the raw \gls{acr:osm} representation is designed for geographic completeness rather than optimization tractability, it contains many intermediate degree-2 nodes and minor local segments that are not essential for the present operator-level routing problem. 
We therefore contract degree-2 nodes along continuous segments and prune minor local roads. 
This preprocessing simplifies the instance while preserving the corridor structure relevant for pricing, routing, rebalancing, and congestion. 
For each $(o,d)\in\od$, the candidate road-path set $P_a^{od}$ is constructed as the top-5 shortest-distance paths on the simplified driving network.

The \gls{acr:pt} network is built from the MTA subway GTFS feed \cite{mta_subway_gtfs}. The resulting Manhattan transit layer contains 25 bidirectional lines and 152 stations. For each \gls{acr:od} pair, the reference transit path $p_{od}^{pt}$ is computed with the R5 routing engine \cite{r5py_2022}. The fixed transit fare is set to $P^{pt}=\$3.0$ per trip.

\subsection{Calibration and computational settings} Unless otherwise specified, the numerical experiments utilize the parameters summarized below. The \gls{acr:amod} operational cost is set to $\$0.60$/km \cite{bosch2018cost}. The \gls{acr:pt} system operates with a vehicle capacity of $900$ passengers per train \cite{fta_mta_2004_feis_ch5b} and incurs an operational cost of $\$320$ per Vehicle Revenue Hour (VRH) \cite{fta_ntst_2024}. The set of admissible transit frequencies is defined as $S := \{2, 3, 4, 5, 6, 12, 20\}$ trains per hour.

In the sensitivity analyses in the experiments, we vary the regularization parameter $\theta$, the background traffic level BL, the \gls{acr:amod} operating cost, and the \gls{acr:pt} parameters $(C_l,K_l)$ to assess the robustness of the framework across materially different operating regimes. Unless otherwise specified, the implementation-payment study uses the baseline calibration above together with the regularization choice selected in \cref{sec:res_so}.

All experiments were run on an HPC cluster with CPU-only nodes (96 cores and approximately 520 GB RAM per node) using Gurobi 10.0.2. Each solve used 4 CPU threads and 32 GB RAM.

\end{document}